\newtheorem{theorem}{Theorem}
\newtheorem{lemma}{Lemma}
\newtheorem{proposition}{Proposition}
\newtheorem{corollary}{Corollary}
\theoremstyle{definition}
\newtheorem{definition}{Definition}
\theoremstyle{remark}
\newtheorem{remark}{Remark}
\newcommand{\F}{\mathcal{F}}
\newcommand{\FF}{\mathbb{F}}
\newcommand{\E}{\mathsf{E}}
\newcommand{\Prob}{\mathsf{P}}
\newcommand{\R}{\mathbb{R}}
\newcommand{\N}{\mathbb{N}}
\newcommand{\B}{\mathcal{B}}
\newcommand{\Beta}{\mathrm{B}}
\newcommand{\oD}{\overline{D}}
\newcommand*{\set}[1]{\left\{#1\right\}}
\newcommand*{\abs}[1]{\left\lvert#1\right\rvert}
\newcommand*{\norm}[1]{\left\lVert#1\right\rVert}
\DeclareMathOperator{\Div}{div}
\begin{document}

\begin{frontmatter}
\pretitle{Research Article}

\title{Existence and uniqueness of mild solution to fractional
stochastic heat equation}

\author{\inits{K.}\fnms{Kostiantyn}~\snm{Ralchenko}\ead
[label=e1]{k.ralchenko@gmail.com}}
\author{\inits{G.}\fnms{Georgiy}~\snm{Shevchenko}\thanksref{cor1}\ead
[label=e2]{zhora@univ.kiev.ua}\orcid{0000-0003-1047-3533}}
\thankstext[type=corresp,id=cor1]{Corresponding author.}
\address{Department of Probability Theory, Statistics and Actuarial
Mathematics, \institution{Taras~Shevchenko National University of
Kyiv}, 64,~Volodymyrs'ka~St.,~01601~Kyiv,~\cny{Ukraine}}

\markboth{K. Ralchenko, G. Shevchenko}{Existence and uniqueness of mild solution to fractional stochastic heat equation}

\begin{abstract}
For a class of non-autonomous parabolic stochastic partial
differential equations defined on a bounded open subset $D \subset\R
^d$ and driven by an $L^2(D)$-valued fractional Brownian motion with
the Hurst index $H>1/2$,
a new result on existence and uniqueness of a mild solution is established.
Compared to the existing results,
the
uniqueness in a fully
nonlinear case is shown, not assuming the coefficient in front of the noise to
be affine. Additionally, 
the existence of moments for the solution is established.
\end{abstract}
\begin{keywords}
\kwd{Fractional Brownian motion}
\kwd{stochastic partial differential equation}
\kwd{mild solution}
\kwd{Green's function}
\end{keywords}
\begin{keywords}[MSC2010]%
\kwd{60H15}
\kwd{35R60}
\kwd{35K55}
\kwd{60G22}
\end{keywords}

\received{\sday{4} \smonth{8} \syear{2018}}
\revised{\sday{23} \smonth{10} \syear{2018}}
\accepted{\sday{23} \smonth{10} \syear{2018}}
\publishedonline{\sday{12} \smonth{12} \syear{2018}}
\end{frontmatter}

\section{Introduction}

In this paper we study an initial--Neumann boundary value problem for
the following non-autonomous stochastic partial differential equation
of parabolic type\index{parabolic type} in a cylinder domain $D\times[0,T]$, driven by an
infinite-dimensional fractional noise:\index{fractional noise}
\begin{align}
du(x,t) &{}= \bigl(\Div \bigl(k(x,t)\nabla
u(x,t) \bigr)+f \bigl(u(x,t) \bigr) \bigr)dt + h \bigl(u(x,t) \bigr)
W^H(x,dt),\notag
\\
(x,t) &{}\in D\times(0,T],\notag
\\
u(x,0) &{}= \varphi(x), \quad x \in D,\notag
\\
\frac{\partial u(x,t)}{\partial n(k)}&{}=0, \quad(x,t)\in\partial D \times(0,T]. \label{eq:spde} %
\end{align}
Here
$D\subset\R^d$ is a bounded domain with the boundary $\partial D$
of class $C^{2+\beta}$ with some $\beta\in(0,1)$,
$W^H$ is an $L^2(D)$-valued fractional Brownian motion\index{fractional Brownian motions} with the Hurst
index $H\in(\frac{1}2,1)$,
$k=\set{k_{i,j}}\colon\oD\times[0,T]\to\R^{d\times d}$ is a
matrix-valued field,
$n(k)(x)\coloneqq k(x,t)n(x)$ denotes the conormal vector-field, and
the last relation in \eqref{eq:spde} refers to the conormal derivative
of $u$ relative to $k$, that is
\[
\frac{\partial u(x,t)}{\partial n(k)} =\sum_{i,j=1}^d
k_{i,j}(x,t) n_i(x) \frac{\partial}{\partial x_j}u(x,t),
\]
$n(x)\in\R^d$ is an outer normal vector to $\partial D$.

Equations similar to \eqref{eq:spde} were studied extensively in
literature, so we will mention only several most relevant articles
here. The articles \cite{grecksch-anh} and \cite{hu2001} considered
heat equations with additive and multiplicative fractional noise,\index{fractional noise}
respectively. The articles \cite{chenetal,Maslowski03,Veraar10} are
devoted to general non-linear evolution equations with fractional
noise;\index{fractional noise} however, the equations are considered in some functional spaces,
and the assumptions on the coefficients imposed there do not cover
general nonlinear equations of the form \eqref{eq:spde}.
The problem \eqref{eq:spde} was considered in \cite{NV06} and then in
\cite{SSV09}, where the notions of \emph{variational} and \emph{mild}
solutions\index{mild solution} were introduced. The article \cite{NV06} established the
existence of a variational solution\index{variational solution} to this equation, but the
uniqueness was shown under the assumption that the function $h$ is
affine. In \cite{SSV09}, it was shown that a variational solution\index{variational solution} to
\eqref{eq:spde} is a mild solution\index{mild solution} too, however, the uniqueness, both
in the variational and in the mild sense, was established also under
the assumption that $h$ is affine, moreover, a rather restrictive
assumption $H>\frac{d+1}{d+2}$ on the Hurst exponent was imposed.

Our goal is to extend the uniqueness results of \cite{SSV09} to the
case of arbitrary $H\in(\frac{1}2,1)$ and non-affine $h$.
Specifically, we prove the uniqueness of a mild solution,\index{mild solution} assuming that
$h$ and its derivative $h'$ are Lipschitz continuous. Since the
existence of a variational solution\index{variational solution} is known from \cite{NV06}, and \cite
{SSV09} established that each variational solution\index{variational solution} is a mild solution,\index{mild solution}
we get existence and uniqueness of a variational solution\index{variational solution} too, thus
finally answering a question posed in \cite{NV06}. We also show that
the solution to \eqref{eq:spde} has finite moments of any order.

It is worth to mention that a similar uniqueness result holds in the
case where the function $h$ in front of $W^H$ depends on $t$
sufficiently regularly, say, H\"older continuous with exponent greater
than $1/2$. However, since our main reference for existence results are
the articles \cite{NV06,SSV09}, in which $h$ is assumed to be
independent of $t$, we will follow this assumption.

The paper is organized as follows.
In Section~\ref{sec:prelim}, we formulate the main hypotheses, and give
the definition of a mild solution\index{mild solution} and basic facts on an $L^2(D)$-valued
fractional Brownian\index{fractional Brownian process} process and stochastic integration\index{stochastic integration} with respect to it.
Section~\ref{sec:Green} contains auxiliary results concerning the
parabolic\index{parabolic Green's function} Green's function.
In Section~\ref{sec:apriori}, we give a priori upper bounds for mild solutions.\index{mild solution}
Finally, the main result on existence and uniqueness of a mild solution\index{mild solution}
is proved in Section~\ref{sec:mild}.

\section{Preliminaries}
\label{sec:prelim}
This section is devoted to the precise statement of the problem \eqref{eq:spde}.
We introduce necessary notation, give the definition of a mild
solution,\index{mild solution} and formulate the assumptions for its existence and uniqueness.

\subsection{Notational conventions}

Throughout the article, $\abs{\,\cdot\,}$ will denote absolute value of
a number, Euclidean norm of a vector or operator norm of a matrix;
exact meaning will always be clear from the context. We will use the
symbol $C$ for a generic constant, the precise value of which is not
important and may vary between different equations and inequalities.

\subsection{Norms and spaces}
Let $\norm{\,\cdot\,}_2$ and $\norm{\,\cdot\,}_\infty$ be the norms in
$L^2(D)$ and $L^\infty(D)$, respectively.
Denote for $\alpha\in(0,1)$,
$u\colon D\times[0,T]\to\R$ and $t\in[0,T]$
\begin{align*}
\norm{u}_{\alpha,1,t}& \coloneqq \sup_{x\in D}\sup
_{s\in[0,t]}\int_0^s
\frac{\abs{u(x,s)-u(x,v)}}{(s-v)^{\alpha+1}}\,dv,
\\
\norm{u}_{\alpha,\infty,t} &\coloneqq \sup_{s\in[0,t]} \norm{u(
\cdot,s)}_\infty+ \norm{u}_{\alpha,1,t},
\\
%
\norm{u}_{\alpha,2,t} &\coloneqq \Biggl( \sup_{s\in[0,t]}\norm{u(
\cdot,s)}_2^2 + \int_0^t
\Biggl( \int_0^s \frac{\norm{u(\cdot,s)-u(\cdot
,v)}_2}{(s-v)^{\alpha+1}}\,dv
\Biggr)^2 ds \Biggr)^{1/2}.
\end{align*}
Denote by $\B^{\alpha,2} (0,T;L^2(D) )$ the Banach space of
measurable mappings $u\colon D\times[0,T]\to\R$ such that $\norm
{u}_{\alpha,2,T}^2<\infty$.

Let $H^1(D)$ be the Sobolev space of functions $f\colon D\to\R$
equipped with the norm
$\norm{f}_{1,2} =  (\norm{f}^2_2+\norm{\nabla f}^2_2 )^{1/2}$.
Also introduce the space
$L^2 (0,T;H^1(D) )$ of measurable mappings $u\colon[0,T]\to
H^1(D)$ such that
\[
\int_0^T\norm{u(t)}_{1,2}^2
\,dt = \int_0^T \bigl(\norm{u(t)}^2_2+
\norm {\nabla u(t)}^2_2 \bigr) dt < \infty.
\]

For $f\colon[0,T]\to\R$ and $\alpha\in(0,1)$ define a seminorm
\[
\norm{f}_{\alpha,0,t} = \sup_{0\le u< v < t} \Biggl(\frac{\abs
{f(v)-f(u)}}{(v-u)^{1-\alpha}}
+ \int_u^v \frac{\abs
{f(u)-f(z)}}{(z-u)^{2-\alpha}}\,dz \Biggr).
\]

\subsection{Assumptions on the coefficients and on the initial value}
\begin{enumerate}[label=\bf(A\arabic*),ref=\rm(A\arabic*)]
\item\label{(A1)}
The coefficients $k_{ij}$ satisfy the following assumptions:
\begin{enumerate}[(i)]
\item
$k_{i,j}=k_{j,i}$ for all $i,j=1,\ldots,d$;
\item
$k_{i,j} \in C^{\beta,\beta'}(\oD\times[0,T])$
for some $\beta'\in(\frac{1}2,1]$ and for all $i,j=1,\ldots,d$;
\item
$\frac{\partial}{\partial x_l} k_{i,j} \in C^{\beta,\beta/2}(\oD\times
[0,T])$ for all $i,j,l=1,\ldots,d$;
\item
there exists $\underline k>0$ such that
\[
\sum_{i,j=1}^d k_{i,j}(x,t)q_iq_j
\ge\underline k \abs{q}^2,
\]
for all $x\in\oD$, $t\in[0,T]$, $q\in\R^d$;
%
\item
$\displaystyle(x,t)\mapsto\sum_{i=1}^dk_{i,j}(x,t)n_i(x) \in C^{1+\beta
,(1+\beta)/2}(\partial D\times[0,T])$ for each $j$;
\item
the conormal vector-field
$(x,t)\mapsto n(k)(x,t)= k(x,t)n(x)$
is outward pointing, nowhere tangent to $\partial D$ for every $t$.
\end{enumerate}

\item\label{(A2)}
The initial condition $\varphi\in C^{2+\beta}(\oD)$ satisfies the
conormal boundary condition relative to $k$.

\item\label{(A3)}
$f,h,h'\colon\R\to\R$ are globally Lipschitz continuous functions.
\end{enumerate}

\begin{remark}
The global Lipshitz assumption implies that $f$ and $h$ are of linear growth:\index{linear growth}
\begin{equation}
\label{eq:lingrow} |f(x)| + |h(x)| \le C(1+|x|).
\end{equation}
It is worth to mention that all results of the article can be proved
assuming linear growth\index{linear growth} and only local Lipschitz continuity\index{Lipschitz continuity} of $f$ and
$h'$ with some extra technical work. We decided to impose the global
Lipschitz continuity\index{Lipschitz continuity} assumption for the sake of simplicity and because
it does not lead to a considerable loss of generality.
\end{remark}

\subsection{\texorpdfstring{$L^2(D)$}{L\texttwosuperior(D)}-valued fractional Brownian\index{fractional Brownian process} process and stochastic integration\index{stochastic integration} with respect to it}

Let us briefly recall the definition of an $L^2(D)$-valued fractional
Brownian\index{fractional Brownian process} process and the corresponding stochastic integral, introduced
in \cite{Maslowski03}.
Assume that $\{\lambda_j, j\,{\in}\,\N\}$ is a sequence of positive real
numbers and
$\set{e_j,j\in\N}$ is an orthonormal basis of $L^2(D)$
such \xch{that}{that that}
\begin{enumerate}[resume*]
\item\label{(A4)}
$\displaystyle\sup_j\norm{e_j}_\infty<\infty
\quad\text{and}\quad
\sum_{j=1}^\infty\lambda_j^{1/2}<\infty$.
\end{enumerate}

Let $(\varOmega,\F,\Prob)$ be a complete probability space.
For a fixed $T>0$ let $\FF=\set{\F}_{t\in[0,T]}$ be a filtration
satisfying the standard assumptions.
Let
$B^H_j=\{B^H_j(t),\allowbreak t\ge0\}$, $j\in\N$,
be a sequence of one-dimensional, independent fractional Brownian
motions\index{fractional Brownian motions} with the Hurst parameter $H\in(1/2,1)$, defined on $(\varOmega,\F
,\FF,\Prob)$ and starting at the origin.
Following \cite{Maslowski03}, define $L^2(D)$-valued fractional
Brownian\index{fractional Brownian process} process $W^H=\set{W^H(\cdot,t),t\ge0}$ by
\begin{equation*}
W^H(\cdot,t) = \sum_{j=1}^\infty
\lambda_j^{1/2} e_j(\cdot) B^H_j(t),
\end{equation*}
where the series converges a.\,s.\ in $L^2(D)$.

In this article we consider a pathwise stochastic integration\index{pathwise stochastic integration} with
respect to $W^H$ in the fractional (generalized Lebesgue--Stieltjes)
sense. Alternatively, one can look at the so-called Skorokhod
(white-noise) integral. However, with the Skorokhod definition,\index{Skorokhod definition} it is
difficult to solve even stochastic ordinary differential equations, see
e.g. \cite{hu2018}.

Fix $\alpha\in(1-H,1/2)$.
Let $\varPhi= \set{\varPhi(t), t\in[0,T]}$ be an adapted stochastic process
taking values in the space of linear bounded operators on $L^2(D)$ such that
\[
\sup_{j\in\N} \int_0^T\!
\Biggl( \frac{\norm{\varPhi(t)e_j}_2}{t^\alpha} + \int_0^t
\frac{\norm{(\varPhi(t)-\varPhi(s))e_j}_2}{(t-s)^{\alpha+1}}ds \Biggr)dt <\infty.
\]
Following \cite{Maslowski03} (see also \cite{NV06,SSV09}), we introduce
the integral with respect to an $L^2(D)$-valued fractional Brownian\index{fractional Brownian process}
process by
\[
\int_a^b \varPhi(s) \,dW^H(s)
\coloneqq\sum_{j=1}^\infty\lambda_j^{1/2}
\int_a^b \varPhi(s) e_j
\,dB^H_j(s),
\]
where the integrals with respect to $B^H_j$, $j\in\N$, are understood
as pathwise generalized Lebesgue--Stieltjes integrals.
Such integrals are defined in terms of fractional derivatives, the
detailed exposition of this approach can be found,
e.\,g., in the book \cite[Section 2.1]{mishura}.
We mention only that under above assumptions, the generalized
Lebesgue--Stieltjes integral $\int_a^b \varPhi(s) e_j \,dB^H_j(s)$ is well
defined and
admits the bound
\begin{align}
&\abs{\int_a^b \varPhi(s) e_j \,dB^H_j(s)}\notag\\
&\quad{}\le C_\alpha\norm{B^H_j}_{\alpha,0,b} \int
_a^b \Biggl(\frac{\abs{\varPhi(s) e_j}}{(s-a)^\alpha} +
\int_a^s\frac{\abs{ (\varPhi(s)-\varPhi(v) ) e_j}}{(s-v)^{\alpha+1}}\, dv \Biggr)ds
\label{eq:bound-int}
\end{align}
for some constant $C_\alpha>0$.

\subsection{Mild solution\index{mild solution}}
Following \cite{SSV09},
we understand a solution to the problem \eqref{eq:spde} in a mild sense.
Its definition uses the notion of the parabolic\index{parabolic Green's function} Green's function
$G(x,t,y,s)$, $x,y\in\oD$, $0\le s<t\le T$, associated with the
principal part of \eqref{eq:spde} (see, e.\,g., \cite
{EI70,Eidelman98,friedman1969,ladyzhenskaya1968}).
For every $(y,s)\in D\times(0,T]$, $G(x,t,y,s)$ is a classical solution
to the linear initial-boundary value problem
\begin{align}
\partial_t G(x,t;y,s) &= \Div
\bigl(k(x,t)\nabla_x G(x,t;y,s) \bigr), \quad(x,t) \in D\times(0,T],
\notag
\\
\frac{\partial G(x,t;y,s)}{\partial n(k)}&=0, \quad(x,t)\in\partial D \times(0,T],
\label{eq:pde} %
\end{align}
with
\[
\int_D G(\cdot,s;y,s) \varphi(y)\,dy := \lim
_{t\downarrow s} \int_D G(\cdot,t;y,s) \varphi(y)
\,dy = \varphi(\cdot).
\]
In the next section we consider the properties of $G$ in detail.

\begin{definition}[\cite{SSV09}]
Fix $H\in(1/2,1)$ and $\alpha\in(1-H,1/2)$.
An $L^2(D)$-valued random field $\set{u(\cdot,t),t\in[0,T]}$ is a \emph
{mild solution\index{mild solution}} to the problem \eqref{eq:spde} if the following two
conditions are satisfied:
\begin{enumerate}[(1)]
\item$u\in L^2 (0,T;H^1(D) ) \cap\B^{\alpha,2}
(0,T;L^2(D) )$ a.\,s.
\item
The relation
\begin{align}
u(\cdot,t) &= \int_D G(\cdot,t;y,0)\varphi(y)\,dy + \int
_0^t\int_D G(
\cdot,t;y,s)f\bigl(u(y,s)\bigr)\,dy\,\xch{ds}{ds+{}} \notag
\\
&\quad+ \sum_{j=1}^\infty
\lambda_j^{1/2} \int_0^t
\int_D G(\cdot ,t;y,s)h\bigl(u(y,s)\bigr)e_j(y)
\,dy\,dB^H_j(s) \label{eq:mild}
\end{align}
holds a.\,s.\ for every $t\in[0,T]$ as an equality in $L^2(D)$.
\end{enumerate}
\end{definition}

\section{Properties of Green's function}
\label{sec:Green}
In this section we collect several upper bounds for Green's function
$G$, needed for the proof of the main result.

Denote
\[
\varPhi^C_{t}(x)\coloneqq t^{-d/2} \exp \bigl
\{-C\abs{x}^2/t \bigr\}, \quad t>0,\; x\in\R^d.
\]

It is known from \cite{EI70,Eidelman98} that under assumptions \ref
{(A1)} and \ref{(A2)}
$G$ is a continuous function, twice continuously differentiable in $x$,
once continuously differentiable in $t$.
Moreover, $G$ satisfies the heat kernel estimates
\begin{equation}
\label{eq:dG1} \abs{\partial_x^\mu\partial_t^\nu
G(x,t;y,s)} \le C(t-s)^{-(\abs{\mu}_1+2\nu)/2} \varPhi^C_{t-s}(x-y)
\end{equation}
for $\mu=(\mu_1,\dots,\mu_d)$,
$\mu_1,\dots,\mu_d,\nu\in\N\cup\set{0}$, and
$\abs{\mu}_1 + 2\nu\le2$ with
$\abs{\mu}_1 = \sum_{j=1}^d\mu_j$.
In particular, for $\abs{\mu}_1=\nu=0$, we have
\begin{equation}
\label{eq:gaus} \abs{G(x,t;y,s)} \le C \varPhi^C_{t-s}(x-y).
\end{equation}
The inequality \eqref{eq:gaus} is sometimes called the \emph{Gaussian\index{Gaussian property}
property} of $G$ (\cite{SSV03,SSV09}).

Several important properties of the parabolic\index{parabolic Green's function} Green's function $G$
follow from the fact that it is,
for every $(x,t)\in D\times[0,T]$, a classical solution to the linear
boundary value problem
\begin{align*}
\partial_s G(x,t;y,s) &= -\Div \bigl(k(y,s)\nabla_y
G(x,t;y,s) \bigr), \quad(y,s) \in D\times(0,T], 
\\
\frac{\partial G(x,t;y,s)}{\partial n(k)}&=0, \quad(y,s)\in\partial D \times(0,T],
\end{align*}
dual to \eqref{eq:pde}.
In particular, along with \eqref{eq:dG1} we have also
\begin{equation}
\label{eq:dG2} \abs{\partial_y^\mu\partial_s^\nu
G(x,t;y,s)} \le C(t-s)^{-(\abs{\mu}_1+2\nu)/2} \varPhi^C_{t-s}(x-y)
\end{equation}
for $\abs{\mu}_1 + 2\nu\le2$, and, moreover, the following
convolution formula holds:
\begin{equation}
\label{eq:conv} G(x,t;y,s) = \int_D G(x,t;z,\sigma) G(z,
\sigma;y,s)\,dz \quad\text{for all }\sigma\in(s,t),
\end{equation}
see \cite[Appendix, p.~232--233]{EI70} for the details.\goodbreak

Furthermore, according to Eqs.\ (3.4)--(3.5) from \cite{SSV09},
$G$ satisfies the following inequalities
for all $x,y\in D$ and $\delta\in(\frac{d}{d+2},1)$.
\begin{enumerate}[(i)]
\item
For all $0<r<v<t<T$ and some $t^*\in(r,v)$,
\begin{equation}
\label{eq:deltaG1} \abs{G(x,t;y,v)-G(x,t;y,r)} \le C(t-v)^{-\delta}
(v-r)^{\delta}\, \varPhi ^C_{t-t^*}(x-y).
\end{equation}
\item
For all $0<v<s<t<T$ and some $v^*\in(s,t)$,
\begin{equation}
\label{eq:deltaG} \abs{G(x,t;y,v)-G(x,s;y,v)} \le C(t-s)^\delta(s-v)^{-\delta}
\, \varPhi ^C_{v^*-v}(x-y).
\end{equation}
\end{enumerate}

\begin{lemma}
Under assumptions \ref{(A1)}--\ref{(A2)},
for all $0<r<v<s<t<T$ and for all $x,y\in D$,
\begin{align}
&\abs{G(x,t;y,v)-G(x,s;y,v)-G(x,t;y,r)+G(x,s;y,r)}\notag\\
&\quad{}\le C \int_r^v \int
_s^t (\theta-\tau)^{-2}
\varPhi_{\theta
-\tau}^C (x-y) d\theta\,d\tau. \label{eq:deltaG2}
\end{align}
\end{lemma}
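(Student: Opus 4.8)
The plan is to recognize the left-hand side of \eqref{eq:deltaG2} as a mixed second difference of $G$ in the time variables $t,s$ and $v,r$, and to represent it as a double integral of a mixed partial derivative. Writing $\Delta_{t,s}$ for the first difference in the forward time and $\Delta_{v,r}$ for the difference in the backward time, the quantity is $\Delta_{v,r}\Delta_{t,s}G(x,\cdot;y,\cdot)$. Since $G$ is $C^1$ in each time argument (away from the diagonal) by the smoothness recalled before \eqref{eq:dG1}, and in fact jointly regular enough on the relevant region $\{r<v<s<t\}$, the fundamental theorem of calculus applied twice gives
\[
\text{LHS} = \int_r^v \int_s^t \partial_\theta\partial_\tau G(x,\theta;y,\tau)\, d\theta\, d\tau,
\]
where $\partial_\theta$ is the derivative in the first (forward) time slot and $\partial_\tau$ the derivative in the second (backward) time slot. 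The key point is that for $r<\tau<v<s<\theta<t$ we have $\theta-\tau>0$ bounded away from $0$ on the interior, so no singularity is encountered in the integration and the double integral is legitimate.

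Next I would estimate the integrand $\abs{\partial_\theta\partial_\tau G(x,\theta;y,\tau)}$. This is a bound on a second-order mixed time derivative of the Green's function, which is not directly one of \eqref{eq:dG1}, \eqref{eq:dG2}: those only cover $\abs{\mu}_1+2\nu\le 2$ with a single spatial/temporal multi-index on one side. The cleanest route is to exploit the convolution (semigroup) identity \eqref{eq:conv}: pick an intermediate time $\sigma$ with $\tau<\sigma<\theta$, write $G(x,\theta;y,\tau)=\int_D G(x,\theta;z,\sigma)G(z,\sigma;y,\tau)\,dz$, differentiate once in $\theta$ on the first factor and once in $\tau$ on the second factor, and apply the one-sided time-derivative estimates \eqref{eq:dG1} and \eqref{eq:dG2} (with $\nu=1$, $\mu=0$) to each factor. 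Choosing $\sigma=(\theta+\tau)/2$ gives $\theta-\sigma=\sigma-\tau=(\theta-\tau)/2$, so each factor contributes $(\theta-\tau)^{-1}$ and a Gaussian kernel $\varPhi^C_{(\theta-\tau)/2}$; the semigroup property of the Gaussian kernel (convolution of two heat kernels of comparable time scale is again a heat kernel of comparable scale, up to adjusting the constant $C$) collapses the $z$-integral, yielding
\[
\abs{\partial_\theta\partial_\tau G(x,\theta;y,\tau)} \le C(\theta-\tau)^{-2}\,\varPhi^C_{\theta-\tau}(x-y).
\]
Substituting this into the double integral representation gives exactly \eqref{eq:deltaG2}.

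The main obstacle is justifying the differentiation under the convolution integral and the Gaussian semigroup collapse with the right power of $(\theta-\tau)$ — that is, making rigorous that $\int_D \varPhi^C_{(\theta-\tau)/2}(x-z)\varPhi^C_{(\theta-\tau)/2}(z-y)\,dz \le C\,\varPhi^{C'}_{\theta-\tau}(x-y)$ (a standard Chapman–Kolmogorov-type computation for Gaussians, with the constant in the exponent necessarily deteriorating) and that the two $(\theta-\tau)^{-1}$ factors from the time derivatives of $G(x,\theta;z,\sigma)$ and $G(z,\sigma;y,\tau)$ multiply correctly. A secondary technical point is confirming that $\partial_\theta\partial_\tau G$ is genuinely continuous on the open region $r<\tau<v<s<\theta<t$ so that Fubini and the iterated fundamental theorem of calculus apply; this follows from the classical parabolic regularity of $G$ recalled from \cite{EI70,Eidelman98} together with the convolution representation, which expresses the mixed derivative as an integral of a product of continuous functions. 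Once these two points are in place, the estimate assembles immediately and no delicate cancellation is needed — unlike \eqref{eq:deltaG1}–\eqref{eq:deltaG} individually, here the gain comes purely from integrating the (integrable, since we are away from the diagonal) bound $(\theta-\tau)^{-2}$ over the product of the two short time intervals $[r,v]$ and $[s,t]$.
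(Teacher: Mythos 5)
Your proposal is correct and follows essentially the same route as the paper: the mixed second difference is written as $\int_s^t\int_r^v \partial_\theta\partial_\tau G(x,\theta;y,\tau)\,d\tau\,d\theta$, the mixed derivative is factored via the convolution identity \eqref{eq:conv} with $\sigma=(\theta+\tau)/2$, the one-sided bounds \eqref{eq:dG1} and \eqref{eq:dG2} with $\nu=1$, $\abs{\mu}_1=0$ are applied to each factor, and the Gaussian convolution is collapsed to yield the $(\theta-\tau)^{-2}\varPhi^C_{\theta-\tau}(x-y)$ bound. The technical points you flag (differentiating under the convolution integral and the Chapman--Kolmogorov collapse of the two Gaussian kernels) are exactly the ones the paper handles, tacitly, in the same way.
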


\begin{proof}
Write
\begin{align}
& G(x,t;y,v)-G(x,s;y,v)-G(x,t;y,r)+G(x,s;y,r)
\notag\\*
&\label{eq:deltaG3}\quad{} =\int_s^t \int_r^v
\frac{\partial^2}{\partial\theta\,\partial\tau} G(x,\theta;y,\tau) \,d\tau\,d\theta.
\end{align}
By \eqref{eq:conv}, the equality
\[
\frac{\partial^2}{\partial\theta\,\partial\tau} G(x,\theta;y,\tau) = \int_D
\partial_\theta G(x,\theta;z,\sigma) \partial_\tau G(z,\sigma
;y,\tau)\,dz
\]
holds with $\sigma= \frac{\tau+ \theta}{2}$.
By applying the bounds\index{bounds} \eqref{eq:dG1} and \eqref{eq:dG2} with $\abs{\mu
}_1=0$ and $\nu=1$, we see that
\begin{align*}
\abs{\frac{\partial^2}{\partial\theta\,\partial\tau} G(x,\theta;y,\tau)} &\le \int_D
\abs{\partial_\theta G(x,\theta;z,\sigma) \partial_\tau G(z,
\sigma;y,\tau)}\,dz\,d\sigma
\\
&\le C (\theta-\sigma)^{-1} (\sigma-\tau)^{-1} \int
_D \varPhi^C_{\theta-\sigma}(x-z)
\varPhi^C_{\sigma-\tau}(z-y) \,dz
\\
& \le C (\theta-\tau)^{-2}\varPhi_{\theta-\tau}^C(x-y).
\end{align*}
Combining this bound with \eqref{eq:deltaG3}, we conclude the proof.
\end{proof}

\section{A priori estimates}
\label{sec:apriori}

Fix $H\in(1/2,1)$ and $\alpha\in(1-H,1/2)$.
Let $u$ be a mild solution\index{mild solution} to \eqref{eq:spde}, defined by~\eqref{eq:mild}.
Note that the random variable
\begin{equation*}
\xi_{\alpha,H,T}\coloneqq1+\sum_{j=1}^\infty
\lambda_j^{1/2} \norm {B^H_j}_{\alpha,0,T}
\end{equation*}
is finite a.\,s., see \cite{MRS18}.

The goal of this section is to prove the following result.
\begin{proposition}
\label{pr:1}
Under assumptions \ref{(A1)}--\ref{(A4)},
\[
\norm{u}_{\alpha,\infty,T} \le C \exp\set{C \xi_{\alpha
,H,T}^{1/(1-\alpha)}}.
\]
\end{proposition}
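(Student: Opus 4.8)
The plan is to estimate the three terms in the mild formulation \eqref{eq:mild} separately in the norm $\norm{\,\cdot\,}_{\alpha,\infty,t}$ and then close a Gronwall-type inequality. Write $u = u_0 + u_f + u_h$, where $u_0(\cdot,t) = \int_D G(\cdot,t;y,0)\varphi(y)\,dy$ is the contribution of the initial data, $u_f$ the drift term, and $u_h$ the stochastic term. The first step is a deterministic bound: using the Gaussian property \eqref{eq:gaus}, the gradient estimate \eqref{eq:dG1} with $\abs{\mu}_1 = 1$, and the time-increment bounds \eqref{eq:deltaG1}--\eqref{eq:deltaG}, together with the fact that $\varphi \in C^{2+\beta}(\oD)$, one shows $\norm{u_0}_{\alpha,\infty,T} \le C$; here the key computations are the standard convolution estimates $\int_D \varPhi^C_{t-s}(x-y)\,dy \le C$ and $\int_D \varPhi^C_\theta(x-y)\varPhi^C_\sigma(y-z)\,dy \le C\varPhi^C_{\theta+\sigma}(x-z)$.

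The second step bounds the drift term. Using \eqref{eq:gaus}, \eqref{eq:dG1}, \eqref{eq:deltaG1}, \eqref{eq:deltaG} and the linear growth \eqref{eq:lingrow} of $f$, one estimates both $\sup_{x,s}\abs{u_f(x,s)}$ and the fractional seminorm $\norm{u_f}_{\alpha,1,t}$. The crucial point is that, since $\delta$ in \eqref{eq:deltaG1}--\eqref{eq:deltaG} can be chosen in $(\frac{d}{d+2},1)$ and $\alpha < 1/2$, all the resulting time integrals converge, and one gets $\norm{u_f}_{\alpha,\infty,t} \le C\int_0^t (1 + \norm{u}_{\alpha,\infty,s})\,(t-s)^{-\gamma}\,ds$ for some $\gamma < 1$ (after bounding $\abs{u(y,s)}$ pointwise by $\norm{u}_{\alpha,\infty,s}$ and using that $\norm{u(\cdot,s)}_\infty \le \norm{u}_{\alpha,\infty,s}$).

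The third and hardest step is the stochastic term $u_h$. Here the pathwise integral bound \eqref{eq:bound-int} converts $u_h$ into a deterministic expression weighted by $\norm{B^H_j}_{\alpha,0,t}$; summing over $j$ and using assumption \ref{(A4)} ($\sum_j \lambda_j^{1/2} < \infty$ and $\sup_j \norm{e_j}_\infty < \infty$) produces the factor $\xi_{\alpha,H,T}$. One must estimate, for the integrand $\Psi_j(s) := \int_D G(\cdot,t;y,s) h(u(y,s)) e_j(y)\,dy$, both $\norm{\Psi_j(s)}_2 / (s-\cdot)^\alpha$-type quantities and its time increments $\norm{(\Psi_j(s) - \Psi_j(v))e_j}_2$; the time-increment in $s$ is controlled by \eqref{eq:deltaG} and the double increment lemma \eqref{eq:deltaG2}, while the increment coming from $h(u(y,s)) - h(u(y,v))$ uses the Lipschitz property of $h$ and the definition of $\norm{u}_{\alpha,1,s}$. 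After these estimates one arrives, again using $\alpha$-dependent but convergent time integrals, at
\[
\norm{u_h}_{\alpha,\infty,t} \le C\,\xi_{\alpha,H,T}\int_0^t \bigl(1 + \norm{u}_{\alpha,\infty,s}\bigr)(t-s)^{-\gamma'}\,ds
\]
for some $\gamma' < 1$. The main obstacle is carefully tracking the singularities: the Green's function increment bounds each cost a factor $(t-s)^{-\delta}$ or $(s-v)^{-\delta}$, the weight $(s-a)^{-\alpha}$ from \eqref{eq:bound-int} costs another, and one must verify that $\delta$ and $\alpha$ can be chosen simultaneously so that every iterated integral (over $v$, then $s$, then the outer sup/integral) is finite; this is exactly where $H > 1/2$ (hence $\alpha < 1/2$) and $\delta < 1$ are used.

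Combining the three steps gives $\norm{u}_{\alpha,\infty,t} \le C\,\xi_{\alpha,H,T}\bigl(1 + \int_0^t (1 + \norm{u}_{\alpha,\infty,s})(t-s)^{-\gamma''}\,ds\bigr)$ with $\gamma'' < 1$. Since a mild solution a priori satisfies $u \in \B^{\alpha,2}(0,T;L^2(D))$, one first argues $\norm{u}_{\alpha,\infty,t}$ is finite for each $t$ (or works with a truncation and passes to the limit), and then applies the singular Gronwall lemma (Henry's inequality): from $y(t) \le A + B\int_0^t (t-s)^{-\gamma''} y(s)\,ds$ one gets $y(t) \le A\,C\exp\{C\,B^{1/(1-\gamma'')}\,t\}$. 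With $A = C\xi_{\alpha,H,T}$ and $B = C\xi_{\alpha,H,T}$ this yields $\norm{u}_{\alpha,\infty,T} \le C\exp\{C\,\xi_{\alpha,H,T}^{1/(1-\gamma'')}\}$; choosing $\gamma'' = \alpha$ (the dominant singularity) gives the claimed exponent $1/(1-\alpha)$.
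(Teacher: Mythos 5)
Your estimation phase follows the same route as the paper: the same decomposition into initial, drift and stochastic parts, the bound \eqref{eq:bound-int} to make the stochastic integral pathwise, the Green's function estimates \eqref{eq:gaus}, \eqref{eq:deltaG1}, \eqref{eq:deltaG}, \eqref{eq:deltaG2}, and the Lipschitz/linear-growth hypotheses. The genuine gap is in the final Gr\"onwall step. You lump everything into the single quantity $\norm{u}_{\alpha,\infty,s}$ and claim the dominant singularity is $(t-s)^{-\alpha}$, so that Henry's inequality yields the exponent $1/(1-\alpha)$. But when you estimate the fractional seminorm $\norm{u}_{\alpha,1,t}$ in terms of $\norm{u}_s=\sup_{v\le s}\sup_x\abs{u(x,v)}$, the outer weight $(t-s)^{-\alpha-1}$ from the seminorm combines with the weight $(v-s)^{-\alpha}$ coming from \eqref{eq:bound-int}, and by \eqref{eq:beta-2} the integral $\int_0^v (t-s)^{-\alpha-1}(v-s)^{-\alpha}\,ds$ produces $(t-v)^{-2\alpha}$, not $(t-v)^{-\alpha}$; the same happens in several other terms. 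So the scalar inequality you can actually close has kernel $(t-s)^{-2\alpha}$, and the singular Gr\"onwall lemma then gives $\exp\{C\xi^{1/(1-2\alpha)}\}$. This is strictly weaker than the claimed bound, and it is not merely cosmetic: the subsequent moment estimate (Corollary~\ref{cor:1}) relies on the exponent being less than $2$ so that Fernique's theorem applies, and $1/(1-2\alpha)\ge 2$ as soon as $\alpha\ge 1/4$, i.e.\ whenever $H<3/4$ forces such a choice of $\alpha$.

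The paper avoids this by \emph{not} collapsing the two components: it proves the pair of coupled inequalities \eqref{eq:apr-term1} and \eqref{eq:apr-term2}, in which the $(t-s)^{-2\alpha}$ kernel only appears in the off-diagonal coupling from $\norm{u}_s$ into $\norm{u}_{\alpha,1,t}$, while the reverse coupling from $\norm{u}_{\alpha,1,s}$ into $\norm{u}_t$ carries no singularity at all. Introducing the weighted suprema $f_i(\lambda)=\sup_t e^{-\lambda t}(\cdot)$ turns this into a $2\times 2$ linear system whose coefficient matrix has diagonal entries of order $\xi\lambda^{\alpha-1}$ and off-diagonal product of order $\xi^2\lambda^{2\alpha-2}=(\xi\lambda^{\alpha-1})^2$; hence $\lambda=(4K\xi)^{1/(1-\alpha)}$ already makes the system contractive, and the factor $e^{\lambda T}$ delivers exactly the exponent $1/(1-\alpha)$. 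To repair your argument you would either need to reproduce this two-dimensional bookkeeping or find a way to avoid the $(t-s)^{-2\alpha}$ kernel altogether; as written, the last paragraph of your proposal does not establish the stated estimate.
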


We split the proof of Proposition~\ref{pr:1} into two lemmas.
In Lemma \ref{l:1st} we establish an upper bound for
$\sup_{s\in[0,t]} \sup_{x\in D} \abs{u(x,s)}$.
In Lemma \ref{l:2nd} we obtain similar estimate for
$\norm{u}_{\alpha,1,t}$.

In the calculations below we shall often refer to the following simple formulas:
for all $a>0$, $b>0$, and $0 < v < t$,
\begin{align}
\int_v^t (t-s)^{a-1}
(s-v)^{b-1}\,ds &=C (t-v)^{a+b-1}, \label{eq:beta-1}
\\
\int_0^v (t-s)^{-a-b}
(v-s)^{b-1}\,ds &\le C (t-v)^{-a}, \label{eq:beta-2}
\end{align}
where $C=\Beta(a,b)$, the beta function.\index{beta function}
The formula \eqref{eq:beta-1} follows directly from the definition of
the beta function\index{beta function} by the substitution $z=\frac{s-v}{t-v}$. The
inequality \eqref{eq:beta-2} is obtained by the substitution $z=\frac
{v-s}{t-v}$ as follows:
\begin{align*}
&{}\int_0^v (t-s)^{-a-b}
(v-s)^{b-1}\,ds = (t-v)^{-a} \int_0^{\frac{v}{t-v}}
\frac{z^{b-1}}{(1+z)^{a+b}} \,dz
\\
&\quad{}\le(t-v)^{-a} \int_0^\infty
\frac{z^{b-1}}{(1+z)^{a+b}} \,dz =\Beta(a,b) (t-v)^{-a}.
\end{align*}

Denote for brevity
\[
\norm{u}_s = \sup_{s\in[0,t]}\sup_{x\in D}
\abs{u(x,s)}. %
\]
\begin{lemma}
\label{l:1st}
Under assumptions \ref{(A1)}--\ref{(A4)},
\begin{equation}
\label{eq:apr-term1} \norm{u}_t \le C\xi_{\alpha,H,T} \Biggl(1+\int
_0^t \bigl(\norm{u}_s
(t-s)^{-\alpha} + \norm{u}_{\alpha,1,s} \bigr)ds \Biggr),
\end{equation}
for all $t\in[0,T]$.
\end{lemma}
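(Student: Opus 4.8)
The plan is to substitute the mild representation \eqref{eq:mild}, fix $x\in D$, and estimate $\abs{u(x,t)}$ term by term, expressing every bound through $\norm{u}_s=\sup_{v\le s}\sup_{y\in D}\abs{u(y,v)}$ (a nondecreasing function of $s$) and through $\norm{u}_{\alpha,1,s}$. The two deterministic terms are routine. For the initial datum, $\abs{\int_D G(x,t;y,0)\varphi(y)\,dy}\le\norm{\varphi}_\infty\int_D\varPhi^C_{t}(x-y)\,dy\le C$ by \eqref{eq:gaus} and the fact that $\int_{\R^d}\varPhi^C_\tau(z)\,dz$ is independent of $\tau$. For the drift term, the same Gaussian bound and the linear growth \eqref{eq:lingrow} give $\abs{\int_0^t\int_D G(x,t;y,s)f(u(y,s))\,dy\,ds}\le C\int_0^t(1+\norm{u}_s)\,ds\le C\bigl(1+\int_0^t\norm{u}_s(t-s)^{-\alpha}\,ds\bigr)$, the last step using $(t-s)^{\alpha}\le T^{\alpha}$.

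For the stochastic term I would freeze $x$, so that $\psi_j(s):=\int_D G(x,t;y,s)h(u(y,s))e_j(y)\,dy$ is a scalar process in $s$, apply \eqref{eq:bound-int} (in its scalar form) to each $\int_0^t\psi_j(s)\,dB^H_j(s)$, sum over $j$, and factor out $\sup_j\norm{e_j}_\infty<\infty$ and $\sum_j\lambda_j^{1/2}\norm{B^H_j}_{\alpha,0,T}\le\xi_{\alpha,H,T}$, both finite by \ref{(A4)}. This leaves two quantities to be bounded uniformly in $j$: the value integral $\int_0^t s^{-\alpha}\abs{\psi_j(s)}\,ds$ and the increment integral $\int_0^t\int_0^s (s-v)^{-\alpha-1}\abs{\psi_j(s)-\psi_j(v)}\,dv\,ds$. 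The first is immediate: $\abs{\psi_j(s)}\le C\norm{e_j}_\infty(1+\norm{u}_s)$ by \eqref{eq:gaus} and \eqref{eq:lingrow}, so the value integral is $\le C\norm{e_j}_\infty(1+\int_0^t\norm{u}_s s^{-\alpha}\,ds)$, and one converts the weight $s^{-\alpha}$ into $(t-s)^{-\alpha}$ using the monotonicity of $s\mapsto\norm{u}_s$: on $[t/2,t]$ one has $s^{-\alpha}\le 2^{\alpha}(t-s)^{-\alpha}$, while on $[0,t/2]$ the integral is at most $\norm{u}_{t/2}(t/2)^{1-\alpha}/(1-\alpha)\le\int_{t/2}^{t}\norm{u}_s(t-s)^{-\alpha}\,ds$.

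The increment integral is the heart of the matter. I would split $\psi_j(s)-\psi_j(v)=A(s,v)+B(s,v)$, where $A(s,v)=\int_D\bigl(G(x,t;y,s)-G(x,t;y,v)\bigr)h(u(y,s))e_j(y)\,dy$ carries the increment of the kernel and $B(s,v)=\int_D G(x,t;y,v)\bigl(h(u(y,s))-h(u(y,v))\bigr)e_j(y)\,dy$ the increment of the coefficient, and in both cases split the inner $v$-integral at $v_0:=(2s-t)_+$, the point where $s-v_0=t-s$. For $A$: on $[v_0,s]$ (where $s-v\le t-s$) I would use \eqref{eq:deltaG1}, and on $[0,v_0]$ the crude bound $\abs{G(x,t;y,s)-G(x,t;y,v)}\le C\varPhi^C_{t-s}(x-y)+C\varPhi^C_{t-v}(x-y)$; after integrating the Gaussians over $D$ (mass $\le C$), the first piece is $\le C\norm{e_j}_\infty(1+\norm{u}_s)(t-s)^{-\delta}\int_{v_0}^s(s-v)^{\delta-\alpha-1}\,dv\le C\norm{e_j}_\infty(1+\norm{u}_s)(t-s)^{-\alpha}$ for any $\delta\in(\alpha,1)$ chosen also greater than $d/(d+2)$, and the second is $\le C\norm{e_j}_\infty(1+\norm{u}_s)\int_0^{v_0}(s-v)^{-\alpha-1}\,dv\le C\norm{e_j}_\infty(1+\norm{u}_s)(t-s)^{-\alpha}$. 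For $B$: the Lipschitz continuity of $h$ gives $\abs{B(s,v)}\le C\norm{e_j}_\infty\int_D\varPhi^C_{t-v}(x-y)\abs{u(y,s)-u(y,v)}\,dy$; on $[v_0,s]$ one has $t-s\le t-v\le 2(t-s)$, hence $\varPhi^C_{t-v}(x-y)\le C\varPhi^C_{t-s}(x-y)$ (now a kernel constant in $v$), so after Tonelli $\int_{v_0}^s(s-v)^{-\alpha-1}\abs{u(y,s)-u(y,v)}\,dv\le\norm{u}_{\alpha,1,s}$ for each $y$ and then $\int_D\varPhi^C_{t-s}(x-y)\,dy\le C$ gives $\le C\norm{e_j}_\infty\norm{u}_{\alpha,1,s}$; on $[0,v_0]$ I would use $\abs{u(y,s)-u(y,v)}\le 2\norm{u}_s$ and $\int_D\varPhi^C_{t-v}(x-y)\,dy\le C$ together with $\int_0^{v_0}(s-v)^{-\alpha-1}\,dv\le\alpha^{-1}(t-s)^{-\alpha}$, obtaining $\le C\norm{e_j}_\infty\norm{u}_s(t-s)^{-\alpha}$.

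Adding everything, the increment integral is at most $C\norm{e_j}_\infty\bigl(1+\int_0^t(\norm{u}_s(t-s)^{-\alpha}+\norm{u}_{\alpha,1,s})\,ds\bigr)$, whence the stochastic term is $\le C\xi_{\alpha,H,T}\bigl(1+\int_0^t(\norm{u}_s(t-s)^{-\alpha}+\norm{u}_{\alpha,1,s})\,ds\bigr)$; combining with the two deterministic bounds and using $\xi_{\alpha,H,T}\ge 1$ to absorb constants produces that same bound for $\abs{u(x,t)}$. Since the right-hand side is independent of $x$ and nondecreasing in $t$ — here one uses that $r\mapsto\int_0^r g(s)(r-s)^{-\alpha}\,ds$ is nondecreasing for nonnegative nondecreasing $g$ — running the estimate at every $r\le t$ and taking the suprema over $x\in D$ and $r\in[0,t]$ yields \eqref{eq:apr-term1}. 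I expect the main obstacle to be exactly this increment integral: making the power of $(t-s)$ come out equal to $\alpha$ forces the two-region split tuned to $s-v_0=t-s$, and one must take care never to bound $\varPhi^C_{t-v}(x-y)$ by a $v$-independent but non-integrable envelope, so that each spatial integral stays finite while the near-diagonal region is still recognized as $\norm{u}_{\alpha,1,s}$.
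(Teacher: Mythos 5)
Your proposal is correct and follows essentially the same route as the paper: the same term-by-term estimate of the mild formula via \eqref{eq:bound-int}, the Gaussian property \eqref{eq:gaus}, the kernel-increment bound \eqref{eq:deltaG1}, and the Lipschitz/linear-growth hypotheses, concluded by the same monotonicity-and-suprema step. The only differences are cosmetic: you transpose the product decomposition of $G\cdot h(u)$ (the paper keeps $G$ at time $s$ with the $h$-increment and $h$ at time $v$ with the kernel increment), and you handle the singular $v$-integrals by explicit splits at $(2s-t)_+$ and $t/2$ where the paper exchanges the order of integration and invokes the beta-function identities \eqref{eq:beta-1}--\eqref{eq:beta-2} together with the rearrangement inequality.
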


\begin{proof}
By \eqref{eq:mild},
\begin{equation}
\label{eq:apr-I} \abs{u(x,t)} \le I_\varphi+ I_f +
I_h,
\end{equation}
where
\begin{align*}
I_\varphi&= \int_D \abs{G(x,t;y,0)\varphi(y)}
\,dy,
\\
I_f &= \int_0^t\int
_D \abs{G(x,t;y,s)f\bigl(u(y,s)\bigr)}\,dy\,ds,
\\
I_h &= \sum_{j=1}^\infty
\lambda_j^{1/2} \abs{\int_0^t
\int_D G(x,t;y,s)h\bigl(u(y,s)\bigr)e_j(y)
\,dy\,dB^H_j(s)}.
\end{align*}
By \ref{(A2)}, $\varphi$ is bounded. Therefore, the Gaussian property\index{Gaussian property}
\eqref{eq:gaus} implies that
\begin{equation}
\label{eq:apr-I0} I_\varphi\le C\int_D
\varPhi^C_{t}(x-y)\,dy \le C.
\end{equation}
It follows from the linear growth\index{linear growth} property \eqref{eq:lingrow} that
\begin{equation}
\label{eq:f-growth} \abs{f\bigl(u(y,s)\bigr)} \le C \bigl(1+\abs{u(y,s)}\bigr) \le C
(1+\norm{u}_s).
\end{equation}
Then, applying \eqref{eq:gaus}, we get
\begin{equation}
\label{eq:apr-If} %
\begin{aligned} I_f &\le C \int
_0^t (1+\norm{u}_s )\int
_D \abs{G(x,t;y,s)}\, dy\,ds\le C \int_0^t
(1+\norm{u}_s )\,ds. \end{aligned} %
\end{equation}

Using the bound \eqref{eq:bound-int} for the integrals with respect to
$B^H_j$, we may write
\begin{align*}
I_h &\le C \sum_{j=1}^\infty
\lambda_j^{1/2} \norm{B^H_j}_{\alpha,0,t}
\int_0^t \Biggl( \frac{\int_D \abs{G(x,t;y,s)h(u(y,s))e_j(y)}\,dy}{s^{\alpha}}
\\*
&\quad{}+ \int_0^s \frac{\int_D \abs{G(x,t;y,s)h(u(y,s))
- G(x,t;y,v)h(u(y,v))} \abs{e_j(y)}dy}{(s-v)^{\alpha+1}}\,dv \Biggr)
ds.
\end{align*}
The assumption \ref{(A4)} implies that $\sup_j\abs{e_j(y)}\le C$ for
all $y\in D$.
Therefore,
\begin{equation}
\label{eq:apr-Ih} I_h \le C \xi_{\alpha,H,T} (I_{h1}+I_{h2}+I_{h3}),
\end{equation}
where
\begin{align*}
I_{h1} &= \int_0^t s^{-\alpha}
\int_D \abs{G(x,t;y,s)h\bigl(u(y,s)\bigr)}\,dy\,ds,
\\
I_{h2} &= \int_0^t \int
_0^s (s-v)^{-\alpha-1}\int
_D \abs{G(x,t;y,s)} \abs{h\bigl(u(y,s)\bigr) - h
\bigl(u(y,v)\bigr)} dy\,dv\,ds,
\\
I_{h3} &= \int_0^t \int
_0^s (s-v)^{-\alpha-1}\int
_D \abs{G(x,t;y,s) - G(x,t;y,v)} \abs{h\bigl(u(y,v)\bigr)}
dy\,dv\,ds.
\end{align*}
The term $I_{h1}$ can be estimated similarly to $I_f$, using the linear
growth\index{linear growth} of $h$ and the Gaussian\index{Gaussian property} property of $G$:
\begin{equation*}
I_{h1} \le C \int_0^t (1+
\norm{u}_s )s^{-\alpha}\,ds.
\end{equation*}
Since $\norm{u}_s$ is non-decreasing and $s^{-\alpha}$ is
non-increasing, we can use the rearrangement inequality \cite[Theorem
378]{hardy-littlewood} to obtain
\begin{equation}
\label{eq:apr-Ih1} I_{h1} \le C \int_0^t
(1+\norm{u}_s ) (t-s)^{-\alpha}\,ds.
\end{equation}
By the Lipschitz continuity\index{Lipschitz continuity} of $h$,
\[
I_{h2} \le\int_0^t \int
_D \abs{G(x,t;y,s)} \int_0^s
\frac{\abs{u(y,s) - u(y,v)}}{(s-v)^{\alpha+1}}\,dv\,dy\,ds.
\]
The inner integral can be bounded by $\norm{u}_{\alpha,1,s}$.
Therefore, we get
\begin{equation}
\label{eq:apr-Ih2} I_{h2} \le\int_0^t
\norm{u}_{\alpha,1,s} \int_D \abs{G(x,t;y,s)}\,dy\,ds
\le C \int_0^t \norm{u}_{\alpha,1,s} \,ds.
\end{equation}
In order to estimate $I_{h3}$, we use \eqref{eq:deltaG1} together with
the bound
\begin{equation}
\label{eq:h-growth} \abs{h\bigl(u(y,v)\bigr)} \le C \bigl(1+\abs{u(y,v)}\bigr) \le C
(1+\norm{u}_v).
\end{equation}
We have
\begin{align*}
I_{h3} &\le C\int_0^t
(t-s)^{-\delta} \int_0^s (1+
\norm{u}_v) (s-v)^{\delta-\alpha-1} \int_D
\varPhi^C_{t-t^*}(x-y)\,dy\,dv\,ds
\\[-2pt]
&\le C\int_0^t (t-s)^{-\delta} \int
_0^s (1+\norm{u}_v)
(s-v)^{\delta
-\alpha-1}\,dv\,ds
\\[-2pt]
&= C\int_0^t (1+\norm{u}_v) \int
_v^t (t-s)^{-\delta} (s-v)^{\delta
-\alpha-1}
\,ds\,dv,
\end{align*}
where we choose $\delta\in(\frac{d}{d+2},1)$ so that $\delta>\alpha$.
Computing the inner integral by \eqref{eq:beta-1}, we get
\begin{equation}
\label{eq:apr-Ih3} I_{h3} \le C\int_0^t
(1+\norm{u}_v) (t-v)^{-\alpha}\,dv.
\end{equation}
Combining \eqref{eq:apr-I}, \eqref{eq:apr-I0}, \eqref{eq:apr-If}--\eqref
{eq:apr-Ih2}, \eqref{eq:apr-Ih3}, we obtain
\[
\abs{u(x,t)} \le C\xi_{\alpha,H,T} \Biggl(1+\int_0^t
\bigl(\norm{u}_s (t-s)^{-\alpha} + \norm{u}_{\alpha,1,s}
\bigr)ds \Biggr),
\]
Since $\norm{u}_s$ and $\norm{u}_{\alpha,1,s}$ are non-decreasing, the
right-hand side here is non-decreasing as well. Indeed, using the
substitution $s = zt$, the integral in the right-hand side can be
rewritten in the form
\[
t^{1-\alpha}\int_0^1\norm{u}_{zt}(1-z)^{-\alpha}dz
+ t\int_0^1\norm{u}_{\alpha,1,zt}dz.
\]
Therefore, taking suprema, we arrive at \eqref{eq:apr-term1}.
\end{proof}

\begin{lemma}
\label{l:2nd}
Under assumptions \ref{(A1)}--\ref{(A4)},
\begin{equation}
\label{eq:apr-term2} 
%
\begin{aligned}
\norm{u}_{\alpha,1,t} \le C\xi_{\alpha,H,T} \Biggl(1 +\int
_0^t \bigl(\norm{u}_s(t-s)^{-2\alpha
}
+ \norm{u}_{\alpha,1,s}(t-s)^{-\alpha} \bigr) ds \Biggr), \end{aligned}
\end{equation}
for all $t\in[0,T]$.
\end{lemma}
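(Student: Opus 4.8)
The plan is to mirror the structure of the proof of Lemma~\ref{l:1st}: starting from the mild formulation \eqref{eq:mild}, I bound the difference $u(x,s)-u(x,v)$ term by term (the $\varphi$-term, the $f$-term, and the $h$-term), divide by $(s-v)^{\alpha+1}$, integrate in $v$ over $(0,s)$, and then take the supremum over $x\in D$ and $s\in[0,t]$ to recover $\norm{u}_{\alpha,1,t}$. For the deterministic drift contributions I write $u(x,s)-u(x,v)$ as a sum of two pieces: one where the Green's function is evaluated at time $s$ versus time $v$ — controlled by the H\"older-type estimate \eqref{eq:deltaG1} — and one where the lower integration limit changes from $0$ to $v$ (or where the integrand argument changes), controlled by the Gaussian property \eqref{eq:gaus} together with the linear growth \eqref{eq:lingrow} and the definition of $\norm{u}_{\alpha,1,s}$. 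Each of these reduces, after spatial integration of $\varPhi^C$, to a scalar double/triple integral in the time variables which is evaluated or bounded by the beta-function formulas \eqref{eq:beta-1} and \eqref{eq:beta-2}, producing factors of the form $(t-s)^{-2\alpha}$ or $(t-s)^{-\alpha}$ against $\norm{u}_s$ or $\norm{u}_{\alpha,1,s}$.

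The genuinely new work is in the stochastic term. Using the pathwise bound \eqref{eq:bound-int} applied to each $B^H_j$, the increment of the stochastic integral splits into contributions indexed by the structure of the generalized Lebesgue--Stieltjes integral, and after summing over $j$ and using \ref{(A4)} we pick up a factor $\xi_{\alpha,H,T}$. The decisive estimate is for the "second-order" increment of the Green's function, namely terms involving $G(x,s;y,\sigma)-G(x,v;y,\sigma)-G(x,s;y,\rho)+G(x,v;y,\rho)$, where the Lemma of Section~\ref{sec:Green} (inequality \eqref{eq:deltaG2}) is exactly what is needed: it bounds this by $C\int_\rho^\sigma\int_v^s(\theta-\tau)^{-2}\varPhi^C_{\theta-\tau}(x-y)\,d\theta\,d\tau$. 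Integrating $\varPhi^C$ in $y$ kills the spatial variable and leaves $\int\int(\theta-\tau)^{-2+d/2}\cdot(\dots)$, wait—more carefully, $\int_D\varPhi^C_{\theta-\tau}(x-y)\,dy\le C$, so one is left with $\int_\rho^\sigma\int_v^s(\theta-\tau)^{-2}\,d\theta\,d\tau$; combined with the two factors $(s-v)^{-\alpha-1}$ and the appropriate power of $\sigma$ coming from \eqref{eq:bound-int}, and after the successive applications of \eqref{eq:beta-1}–\eqref{eq:beta-2} to integrate out the auxiliary time variables, the exponents collapse to give the claimed $(t-s)^{-2\alpha}\norm{u}_s$ and $(t-s)^{-\alpha}\norm{u}_{\alpha,1,s}$ terms. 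Wherever the argument of $h$ varies (rather than, or in addition to, $G$), I use the Lipschitz continuity of $h$ from \ref{(A3)} to replace $|h(u(y,s))-h(u(y,v))|$ by $|u(y,s)-u(y,v)|$ and thereby generate a $\norm{u}_{\alpha,1,s}$ factor directly, exactly as in the bound for $I_{h2}$ in Lemma~\ref{l:1st}.

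The main obstacle I anticipate is bookkeeping rather than conceptual: there are several cross terms (increment in $G$ combined with increment in $h(u)$, increment only in $G$, increment only in $h(u)$, plus the boundary term from the lower limit $s^{-\alpha}$ versus $v^{-\alpha}$), and in each I must verify that the accumulated powers of the time gaps, after the chained beta-integrals, land at precisely $-2\alpha$ or $-\alpha$ and no worse — in particular the $(\theta-\tau)^{-2}$ singularity in \eqref{eq:deltaG2} is borderline and integrable only because it is integrated twice, over intervals of length $\le s-v$ and $\le v-\rho$, so one cannot afford to be sloppy about which integration is done first. I also need, as in Lemma~\ref{l:1st}, to use monotonicity of $\norm{u}_s$ and $\norm{u}_{\alpha,1,s}$ together with the rearrangement inequality (\cite[Theorem 378]{hardy-littlewood}) to convert factors like $s^{-2\alpha}$ into $(t-s)^{-2\alpha}$ so that the right-hand side is non-decreasing in $t$ and the final supremum can be taken. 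Once all terms are assembled, \eqref{eq:apr-term2} follows, and Proposition~\ref{pr:1} then comes from combining \eqref{eq:apr-term1} and \eqref{eq:apr-term2} via a fractional Gronwall-type argument.
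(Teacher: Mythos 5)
Your plan follows the paper's proof essentially verbatim: the same decomposition into $\varphi$-, $f$-, and $h$-contributions, the same splitting of the stochastic increment into boundary terms, $h$-increments handled by Lipschitz continuity, first-order $G$-increments via \eqref{eq:deltaG1} and \eqref{eq:deltaG}, and the second-order $G$-increment via \eqref{eq:deltaG2}, all closed with \eqref{eq:beta-1}--\eqref{eq:beta-2}, the rearrangement inequality, and monotonicity of the right-hand side. The only slip is a citation: the drift increment $G(x,t;y,v)-G(x,s;y,v)$ (first time argument varying) is controlled by \eqref{eq:deltaG}, not \eqref{eq:deltaG1}, but this does not affect the argument.
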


\begin{proof}
By \eqref{eq:mild},
\[
\abs{u(x,t)-u(x,s)} \le J_\varphi+ J_f + J_h,
\]
where
\begin{align*}
J_\varphi&= \int_D \abs{G(x,t;y,0)-G(x,s;y,0)}
\abs{\varphi(y)}\,dy,
\\*
J_f &= \abs{\int_0^t\!\!\int
_D G(x,t;y,v)f\bigl(u(y,v)\bigr)\,dy\,dv-\int
_0^s\!\! \int_D
G(x,s;y,v)f\bigl(u(y,v)\bigr)\,dy\,dv},
\\
J_h &= \sum_{j=1}^\infty
\lambda_j^{1/2} \Biggl\lvert\int_0^t
\int_D G(x,t;y,v)h\bigl(u(y,v)\bigr)e_j(y)
\,dy\,dB^H_j(v)
\\*
&\quad- \int_0^s \int_D
G(x,s;y,v)h\bigl(u(y,v)\bigr)e_j(y)\,dy\,dB^H_j(v)
\Biggr\rvert.
\end{align*}
Then
\[
\int_0^t\frac{\abs{u(x,t)-u(x,s)}}{(t-s)^{\alpha+1}} \,ds \le
K_\varphi+ K_f + K_h,
\]
where
$K_\varphi= \int_0^t\frac{J_\varphi}{(t-s)^{\alpha+1}} \,ds$,
$K_f = \int_0^t\frac{J_f}{(t-s)^{\alpha+1}} \,ds$,
$K_h = \int_0^t\frac{J_h}{(t-s)^{\alpha+1}} \,ds$.

Let $\delta\in(\frac{d}{d+2}\vee\alpha,1)$ be fixed throughout the proof.
Using the boundedness of $\varphi$ and \eqref{eq:deltaG}, we can write
\[
J_\varphi\le Cs^{-\delta}(t-s)^\delta\int
_D \varPhi^C_{v^*}(x-y)\,dy \le
Cs^{-\delta}(t-s)^\delta.
\]
Therefore,
\[
K_\varphi\le C \int_0^ts^{-\delta}(t-s)^{\delta-\alpha-1}
\,ds \le C.
\]

Let us consider $K_f$.
We have that
\begin{align*}
J_f &\le\int_s^t\!\!\int
_D \abs{G(x,t;y,v)f\bigl(u(y,v)\bigr)}\,dy\,dv
\\[-2pt]
&\quad+\int_0^s\!\!\int_D
\abs{G(x,t;y,v) - G(x,s;y,v)} \abs {f\bigl(u(y,v)\bigr)}\,dy\,dv.
\end{align*}
Consequently,
\[
K_f \le K_{f}' + K_{f}'',
\]
where
\begin{align*}
K_{f}' &= \int_0^t
(t-s)^{-\alpha-1} \int_s^t\!\!\int
_D \abs {G(x,t;y,v)f\bigl(u(y,v)\bigr)}\,dy\,dv\,ds,
\\[-2pt]
K_{f}'' &= \int_0^t
(t-s)^{-\alpha-1}\int_0^s\!\!\int
_D \abs{G(x,t;y,v) - G(x,s;y,v)} \abs{f\bigl(u(y,v)\bigr)}
\,dy\,dv\,ds.
\end{align*}
By \eqref{eq:f-growth} and the Gaussian\index{Gaussian property} property of $G$,
\begin{align}
K_{f}' &\le C\int_0^t
(t-s)^{-\alpha-1} \int_s^t(1 +
\norm{u}_v)\,dv\,ds \notag
\\[-2pt]
&= C\int_0^t (1 + \norm{u}_v) \int
_0^v (t-s)^{-\alpha-1}\,ds\,dv \notag
\\[-2pt]
&\le C\int_0^t (1 + \norm{u}_v)
(t-v)^{-\alpha}\,dv\le C \int_0^t (1 +
\norm{u}_v) (t-v)^{-2\alpha}\,dv. \label{eq:Kf'}
\end{align}
%
In order to estimate $K_{f}''$, we apply \eqref{eq:f-growth} and \eqref
{eq:deltaG}, change the order of integration and then use \eqref{eq:beta-1}:
\begin{align}
K_{f}'' &\le C \int_0^t
(t-s)^{\delta-\alpha-1}\int_0^s
(s-v)^{-\delta} (1 + \norm{u}_v) \int_D
\varPhi^C_{v^*-v}(x-y)\,dy\,dv\,ds \notag
\\
&\le C \int_0^t (t-s)^{\delta-\alpha-1}\int
_0^s (s-v)^{-\delta} (1 +
\norm{u}_v)\,dv\,ds \notag
\\
&= C \int_0^t (1 + \norm{u}_v)
\int_v^t(t-s)^{\delta-\alpha-1} (s-v)^{-\delta}
\,ds\,dv \notag
\\
&\le C \int_0^t (1 + \norm{u}_v)
(t-v)^{-\alpha}\,dv\le C \int_0^t (1 +
\norm{u}_v) (t-v)^{-2\alpha}\,dv. \label{eq:Kf''}
\end{align}

Next, consider $K_h$.
We have
\begin{align*}
J_h &\le\sum_{j=1}^\infty
\lambda_j^{1/2} \Biggl\lvert\int_s^t
\!\! \int_D G(x,t;y,v)h\bigl(u(y,v)\bigr)e_j(y)
\,dy\, dB^H_j(v) \Biggr\rvert
\\
&\quad+\sum_{j=1}^\infty\lambda_j^{1/2}
\Biggl\lvert\int_0^s \!\! \int
_D \bigl(G(x,t;y,v) - G(x,s;y,v)\bigr) h\bigl(u(y,v)\bigr)
e_j(y)\,dy\,dB^H_j(v) \Biggr\rvert
\\
&\eqqcolon J_h'+J_h''.
\end{align*}
The integrals with respect to fractional Brownian motions\index{fractional Brownian motions} can be
bounded similarly to $I_h$, applying \eqref{eq:bound-int} and \ref{(A4)}.
We obtain
\begin{align}
J_h' &\le C\xi_{\alpha,H,t} \int
_s^t \Biggl( \frac{\int_D \abs
{G(x,t;y,v)h(u(y,v))} dy}{(v-s)^{\alpha}} \notag
\\*
&\quad+ \int_s^v \frac{\int_D \abs{G(x,t;y,v)h(u(y,v)) -
G(x,t;y,r)h(u(y,r))} dy}{(v-r)^{\alpha+1}} \,dr
\Biggr)dv \notag
\\
&\le C \xi_{\alpha,H,T} (J_{h1}+J_{h2}+J_{h3}),
\label{eq:Jh'}
\end{align}
where
\begin{align*}
J_{h1} &= \int_s^t
(v-s)^{-\alpha}\! \int_D \abs{G(x,t;y,v)h\bigl(u(y,v)
\bigr)} dy\,dv,
\\
J_{h2} &= \int_s^t\!\!\int
_s^v (v-r)^{-\alpha-1} \int
_D \abs{G(x,t;y,v)} \abs{h\bigl(u(y,v)\bigr) - h
\bigl(u(y,r)\bigr)} dy \,dr\,dv,
\\
J_{h3} &= \int_s^t\!\!\int
_s^v (v-r)^{-\alpha-1} \int
_D \abs{G(x,t;y,v) - G(x,t;y,r)} \abs{h\bigl(u(y,r)\bigr)}
dy \,dr\,dv.
\end{align*}

Similarly,
\begin{align*}
J_h'' &\le C\xi_{\alpha,H,s} \int
_0^s \Biggl( v^{-\alpha} \int
_D \abs{G(x,t;y,v) - G(x,s;y,v)} \abs{h\bigl(u(y,v)\bigr)}
dy
\\*
&\quad{}+ \int_0^v (v-r)^{-\alpha-1}\int
_D \bigl\lvert \bigl(G(x,t;y,v) - G(x,s;y,v) \bigr) h
\bigl(u(y,v)\bigr)
\\*
&\quad{}- \bigl(G(x,t;y,r) - G(x,s;y,r) \bigr) h\bigl(u(y,r)\bigr) \bigr
\rvert dy \,dr \Biggr)dv
\\
&\le C \xi_{\alpha,H,T} (J_{h4}+J_{h4}+J_{h6}),
\end{align*}
where
\begin{align*}
J_{h4} &= \int_0^s v^{-\alpha}
\int_D \abs{G(x,t;y,v) - G(x,s;y,v)} \abs{h\bigl(u(y,v)
\bigr)} dy\,dv,
\\
J_{h5} &= \int_0^s\!\!\int
_0^v (v-r)^{-\alpha-1}\int
_D \abs{G(x,t;y,v) - G(x,s;y,v)}
\\*
&\quad\times\abs{h\bigl(u(y,v)\bigr) - h\bigl(u(y,r)\bigr)} dy \,dr \,dv,
\\
J_{h6} &= \int_0^s\!\!\int
_0^v (v-r)^{-\alpha-1} \int
_D \bigl\lvert G(x,t;y,v) - G(x,s;y,v)
\\*
&\quad- G(x,t;y,r) + G(x,s;y,r)\bigr\rvert\abs{h\bigl(u(y,r)\bigr)} dy \,dr
\,dv.
\end{align*}

Now it remains to estimate
$K_{hi}=\int_0^t (t-s)^{-\alpha-1} J_{hi} \,ds$, $i=1,2,\dots,6$.

In order to bound $K_{h1}$ we apply successively \eqref{eq:h-growth},
\eqref{eq:gaus} and \eqref{eq:beta-2} (with $a=2\alpha$, $b=1-\alpha$):
\begin{align}
K_{h1} &\le C\int_0^t
(t-s)^{-\alpha-1} \int_s^t
(v-s)^{-\alpha} (1+\norm{u}_v) \int_D
\abs{G(x,t;y,v)} dy\,dv\,ds \notag
\\
&\le C\int_0^t (t-s)^{-\alpha-1} \int
_s^t (v-s)^{-\alpha} (1+\norm
{u}_v)\,dv\,ds \notag
\\
&= C\int_0^t (1+\norm{u}_v) \int
_0^v (t-s)^{-\alpha-1} (v-s)^{-\alpha}
\,ds \,dv \notag
\\
&\le C\int_0^t (1+\norm{u}_v)
(t-v)^{-2\alpha} \,dv. \label{eq:Kh1}
\end{align}
By the Lipschitz continuity\index{Lipschitz continuity} of $h$,
\[
K_{h2} \le C\int_0^t
(t-s)^{-\alpha-1} \int_s^t\!\! \int
_D \abs{G(x,t;y,v)} \int_s^v
\frac{\abs{u(y,v) -
u(y,r)}}{(v-r)^{\alpha+1}} \,dr \,dy \,dv\,ds.
\]
According to the definition, the inner integral can be bounded by $\norm
{u}_{\alpha,1,v}$. Then we use the Gaussian\index{Gaussian property} property of $G$ to obtain
\begin{align*}
K_{h2} &\le C\int_0^t
(t-s)^{-\alpha-1} \int_s^t
\norm{u}_{\alpha,1,v} \int_D \abs{G(x,t;y,v)} \,dy \,dv
\,ds
\\
&\le C\int_0^t (t-s)^{-\alpha-1} \int
_s^t \norm{u}_{\alpha,1,v} \,dv\,ds
\\
&= C\int_0^t \norm{u}_{\alpha,1,v} \int
_0^v (t-s)^{-\alpha-1} \,ds\,dv
\\
&\le C\int_0^t \norm{u}_{\alpha,1,v}
(t-v)^{-\alpha}\,dv.
\end{align*}
In order to estimate $K_{h3}$, we use \eqref{eq:h-growth} and \eqref
{eq:deltaG1}, and then \eqref{eq:beta-1}:
\begin{align}
K_{h3} &\le C\int_0^t
(t-s)^{-\alpha-1} \int_s^t
(t-v)^{-\delta} \int_s^v
(v-r)^{\delta-\alpha-1} (1+\norm{u}_r) \notag
\\*
&\quad\times\int_D \varPhi^C_{t-t^*}(x-y)
dy \,dr\,dv\,ds \notag
\\
&\le C\int_0^t (t-s)^{-\alpha-1} \int
_s^t (t-v)^{-\delta} \int
_s^v (v-r)^{\delta-\alpha-1} (1+
\norm{u}_r) \,dr\,dv\,ds \notag
\\
&= C\int_0^t (t-s)^{-\alpha-1} \int
_s^t (1+\norm{u}_r) \int
_r^t (t-v)^{-\delta} (v-r)^{\delta-\alpha-1}
\,dv\,dr\,ds \notag
\\
&\le C\int_0^t (t-s)^{-\alpha-1} \int
_s^t (1+\norm{u}_r)
(t-r)^{-\alpha} \,dr\,ds \notag
\\
&= C\int_0^t (1+\norm{u}_r)
(t-r)^{-\alpha} \int_0^r
(t-s)^{-\alpha-1} \,ds\,dr \notag
\\
&\le C\int_0^t (1+\norm{u}_r)
(t-r)^{-2\alpha} \,dr. \label{eq:Kh3}
\end{align}

The term $K_{h4}$ can be bounded similarly with the help of \eqref
{eq:h-growth}, \eqref{eq:deltaG} and \eqref{eq:beta-1}:
\begin{align*}
K_{h4} &\le C\int_0^t
(t-s)^{\delta-\alpha-1} \int_0^s (1+
\norm{u}_v) v^{-\alpha} (s-v)^{-\delta} \int
_D \varPhi^C_{v^*-v}(x-y) dy\,dv\,ds
\\
&\le C\int_0^t (t-s)^{\delta-\alpha-1} \int
_0^s (1+\norm{u}_v) v^{-\alpha}
(s-v)^{-\delta}\,dv\,ds 
\\
&= C\int_0^t (1+\norm{u}_v)
v^{-\alpha} \int_v^t (t-s)^{\delta-\alpha-1}
(s-v)^{-\delta}\,ds\,dv 
\\
&\le C\int_0^t (1+\norm{u}_v)
v^{-\alpha} (t-v)^{-\alpha}\,dv. 
\end{align*}
Since $(1+\norm{u}_v) (t-v)^{-\alpha}$ is non-decreasing and $v^{-\alpha
}$ is non-increasing, using the rearrangement inequality, we obtain
\begin{equation}
\label{eq:Kh4} K_{h4}\le C\int_0^t
(1+\norm{u}_v) (t-v)^{-2\alpha}\,dv.
\end{equation}
From the Lipschitz continuity\index{Lipschitz continuity} of $h$ we get
\begin{align*}
K_{h5} &\le C\int_0^t
(t-s)^{-\alpha-1} \int_0^s\!\!\int
_D \abs{G(x,t;y,v) - G(x,s;y,v)}
\\*
&\quad\times \int_0^v \frac{\abs{u(y,v) - u(y,r)}}{(v-r)^{\alpha+1}} \,dr
\,dy \,dv\,ds.
\end{align*}
From \eqref{eq:deltaG}, \eqref{eq:beta-1} it follows that
\begin{align*}
K_{h5} &\le C\int_0^t
(t-s)^{\delta-\alpha-1} \int_0^s
\norm{u}_{\alpha,1,v} (s-v)^{-\delta} \int_D
\varPhi^C_{v^*-v}(x-y)\,dy \,dv\,ds
\\
&\le C\int_0^t (t-s)^{\delta-\alpha-1} \int
_0^s \norm{u}_{\alpha,1,v}
(s-v)^{-\delta} \,dv\,ds
\\
&= C\int_0^t \norm{u}_{\alpha,1,v} \int
_v^t(t-s)^{\delta-\alpha-1} (s-v)^{-\delta}
\,ds\,dv
\\
&\le C\int_0^t \norm{u}_{\alpha,1,v}
(t-v)^{-\alpha}\,dv.
\end{align*}
Finally, we estimate $K_{h6}$, using \eqref{eq:deltaG2}, \eqref
{eq:beta-2}, \eqref{eq:apr-I0} and \eqref{eq:h-growth}:
\begin{align}
K_{h6} &\le C\int_0^t
(t-s)^{-\alpha-1} \int_0^s \int
_0^v (1+\norm {u}_r)
(v-r)^{-\alpha-1} \notag
\\[-2pt]
&\quad{}\times \int_r^v \int
_s^t (\theta-\tau)^{-2} \int
_D \varPhi^C_{\theta-\tau}(x-y) dy\,d\theta
\,d\tau\,dy \,dr\,dv\,ds\notag
\\[-2pt]
& \le C\int_0^t (1+\norm{u}_\theta)
\int_0^\theta(t-s)^{-\alpha-1} \notag
\\[-2pt]
&\quad{} \times\int_0^s \int_0^v(
\theta-\tau)^{-2}\int_0^\tau
(v-r)^{-\alpha
-1}dr\,d\tau\,dv\,ds\,d\theta\notag
\\[-2pt]
& \le C \int_0^t (1+\norm{u}_\theta)
\int_0^\theta(t-s)^{-\alpha-1} \int
_0^s \int_0^v(
\theta-\tau)^{-2} (v-\tau)^{-\alpha} d\tau\,dv\,ds\, d\theta\notag
\\[-2pt]
& \le C \int_0^t (1+\norm{u}_\theta)
\int_0^\theta(t-s)^{-\alpha-1} \int
_0^s (\theta-v)^{-\alpha-1}dv\,ds\,d\theta
\notag\notag
\\[-2pt]
&\le C \int_0^t (1+\norm{u}_\theta)
\int_0^\theta(t-s)^{-\alpha-1} (
\theta-s)^{-\alpha}ds\,d\theta\notag
\\[-2pt]
&\le C \int_0^t (1+\norm{u}_\theta)
(t-\theta)^{-2\alpha} d\theta. \label{eq:Kh6}
\end{align}

Combining the obtained bounds for $K_\varphi$, $K_{f}'$, $K_{f}''$ and
$K_{hi}$, $i=1,\dots,6$, we obtain
\begin{align*}
&\int
_0^t\frac{\abs{u(x,t)-u(x,v)}}{(t-v)^{\alpha+1}} \,dv \le C
\xi_{\alpha,H,T}
\\[-2pt]
&\quad{}\times \Biggl(1 +\int_0^t \bigl(
\norm{u}_v(t-v)^{-2\alpha
} + \norm{u}_{\alpha,1,v}(t-v)^{-\alpha}
\bigr) dv \Biggr).
\end{align*}
Since the integral in the right-hand side can be rewritten in the form
\[
t^{1-2\alpha}\int_0^1\norm{u}_{zt}(1-z)^{-2\alpha}dz
+ t^{1-\alpha}\int_0^1\norm{u}_{\alpha,1,zt}(1-z)^{-\alpha}dz,
\]
it is a non-decreasing function.
Therefore, taking suprema, we arrive at \eqref{eq:apr-term2}.
%
\end{proof}

\begin{proof}[Proof of Proposition~\ref{pr:1}]
Lemmata \ref{l:1st} and \ref{l:2nd} allow us to use a kind of
two-dimensional Gr\"onwall argument, proposed in \cite[Lemma
4.1]{jumps}. Namely, for some $\lambda>0$, which will be chosen later, define
\[
f_1(\lambda) = \sup_{t\in[0,T]} e^{-\lambda t}
\norm{u}_t, \quad f_2(\lambda) = \sup_{t\in[0,T]}
e^{-\lambda t}\norm{u}_{\alpha,1,t} %
\]
and denote for shortness $\xi= \xi_{\alpha,H,T}$.
From \eqref{eq:apr-term1} we get
\begin{align}
f_1(\lambda)&\le C\xi \Biggl(1+ \sup_{t\in[0,T]}
e^{-\lambda t} \int_0^t \bigl(e^{\lambda s}
f_1(\lambda) (t-s)^{-\alpha} + e^{\lambda
s}f_2(
\lambda) \bigr)ds \Biggr)\notag
\\[-2pt]
& \le C\xi \Biggl(1 + f_1(\lambda)\sup_{t\in[0,T]} \int
_0^t e^{-\lambda
(t-s)} (t-s)^{-\alpha}ds
\notag
\\
& \quad{} + f_2(\lambda)\sup
_{t\in[0,T]} \int_0^t
e^{-\lambda(t-s)}ds \Biggr)\notag
\\
& \le C\xi \Biggl(1 + f_1(\lambda) \int_0^\infty
e^{-\lambda u} u^{-\alpha
}du + f_2(\lambda)\int
_0^\infty e^{-\lambda u}du \Biggr)\notag
\\
& \le C\xi \bigl(1 + \lambda^{\alpha-1} f_1(\lambda) + \lambda
^{-1}f_2(\lambda) \bigr). \label{eq:gron1}
\end{align}
Similarly, from \eqref{eq:apr-term2} we get
\begin{align}
f_2(\lambda)\le C\xi \bigl(1+f_1(\lambda)
\lambda^{2\alpha-1} + f_2(\lambda )\lambda^{\alpha-1} \bigr) .
\label{eq:gron2}
\end{align}
Let $K$ be the largest of the constants in \eqref{eq:gron1} and \eqref
{eq:gron2}; without loss of generality we can assume that $K\ge1$.
Setting $\lambda= (4K\xi)^{1/(1-\alpha)}$ and plugging it into \eqref
{eq:gron2}, we obtain
\begin{align*}
f_2(\lambda)& \le K\xi \bigl(1+\lambda^{2\alpha-1}
f_1(\lambda) + \lambda^{-1}f_2(\lambda) \bigr)
\\
& = \frac{1}4 \lambda^{1-\alpha} \bigl(1+ \lambda^{2\alpha-1}
f_1(\lambda )+ f_2(\lambda) \bigr)
\\
& = \frac{1}4 \bigl( \lambda^{1-\alpha} + \lambda^\alpha
f_1(\lambda) + f_2(\lambda) \bigr),
\end{align*}
whence $f_2(\lambda) \le\frac{1}3  (\lambda^{1-\alpha} + \lambda
^\alpha f_1(\lambda) )$. Plugging this into \eqref{eq:gron1} and
noting that $\lambda\ge1$, we get
\begin{align*}
f_1(\lambda) &\le\frac{1}4 \lambda^{1 - \alpha} \biggl(1+
\lambda^{\alpha
-1}f_1(\lambda) + \frac{1}3
\lambda^{-\alpha} + \frac{1}3 \lambda^{\alpha
-1}f_1(
\lambda) \biggr)
\\
& \le\lambda^{1-\alpha} + \frac{1}{3}f_1(\lambda),
\end{align*}
whence
$f_1(\lambda) \le\frac{3}2 \lambda^{1-\alpha} = 6K\xi$. It follows that
\[
\norm{u}_T \le f_1(\lambda)e^{\lambda T} \le6K\xi\exp
\set{(4K\xi )^{1/(1-\alpha)}T}\le C \exp\set{C\xi^{1/(1-\alpha)}}. %
\]
Similarly,
\[
\norm{u}_{\alpha,1,T}\le f_2(\lambda)e^{\lambda T}\le
\lambda^{\alpha
}f_1(\lambda)e^{\lambda T}\le C \exp\set{C
\xi^{1/(1-\alpha)}}. %
\]
The statement then follows from adding these estimates.
\end{proof}
By Fernique's theorem, $\E\exp\{a \xi^2\}<\infty$ for some $a>0$.
Since $\frac{1}{1-\alpha}<2$, we have $C\xi^{1/(1-\alpha)}< a \xi^2 +
C$, so Proposition~\ref{pr:1} implies existence of moments of the solution.

\begin{corollary}\label{cor:1}
Under assumptions \ref{(A1)}--\ref{(A4)}, the solution $u$ to \eqref
{eq:spde} satisfies
\begin{equation*}
\E\norm{u}_{\alpha,\infty,T}^p <\infty
\end{equation*}
for all $p>0$. In particular,
\begin{equation*}
\E\sup_{t\in[0,T],x\in D} \abs{u(t,x)}^p <\infty
\end{equation*}
for all $p>0$.
\end{corollary}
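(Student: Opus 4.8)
The plan is to read the corollary off directly from Proposition~\ref{pr:1}, which supplies the pathwise bound $\norm{u}_{\alpha,\infty,T}\le C\exp\{C\,\xi^{1/(1-\alpha)}\}$ with $\xi:=\xi_{\alpha,H,T}$. Fix $p>0$; raising this inequality to the power $p$ and taking expectations reduces the claim to showing $\E\exp\{\kappa\,\xi^{1/(1-\alpha)}\}<\infty$ for every $\kappa>0$. Since $\alpha<1/2$ the exponent $q:=\frac{1}{1-\alpha}$ satisfies $q<2$, so $\kappa x^{q}/x^{2}=\kappa x^{q-2}\to0$ as $x\to\infty$; hence for any $\eta>0$ there is a finite constant $C_{\kappa,\eta}$ with $\kappa x^{q}\le\eta x^{2}+C_{\kappa,\eta}$ for all $x\ge0$, and therefore $\E\exp\{\kappa\,\xi^{q}\}\le e^{C_{\kappa,\eta}}\,\E\exp\{\eta\,\xi^{2}\}$. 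Thus it suffices to produce one $\eta>0$ — which may be taken as small as needed — for which $\E\exp\{\eta\,\xi^{2}\}<\infty$. This is exactly the Fernique-type integrability of $\xi$ recorded in the remark preceding the statement.

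For completeness I would sketch that integrability, as it is the only non-elementary ingredient. The quantity $\norm{\,\cdot\,}_{\alpha,0,T}$ is a measurable seminorm on path space which is a.\,s.\ finite under the law of a fractional Brownian motion with Hurst index $H$ (a centered Gaussian measure; finiteness holds because $\alpha>1-H$ forces the paths to be H\"older of order exceeding $1-\alpha$), so Fernique's theorem gives a single $\eta_0>0$ and a single constant $M_0<\infty$ — identical for all $j$, since the $B^H_j$ have the same law — with $\E\exp\{\eta_0\norm{B^H_j}_{\alpha,0,T}^2\}=M_0$. Setting $c:=\sum_j\lambda_j^{1/2}<\infty$ by \ref{(A4)} and applying Cauchy--Schwarz with the weights $\lambda_j^{1/2}/c$ yields $\xi^2\le 2+2c\sum_j\lambda_j^{1/2}\norm{B^H_j}_{\alpha,0,T}^2$; choosing $\eta$ so small that $2c\eta\,\lambda_j^{1/2}\le\eta_0$ for every $j$ (possible because $\sup_j\lambda_j^{1/2}\le c$), independence of the $B^H_j$ together with convexity of the exponential bounds $\E\exp\{\eta\,\xi^2\}$ by $e^{2\eta}\prod_j\bigl(1+2c\eta\lambda_j^{1/2}\eta_0^{-1}(M_0-1)\bigr)<\infty$, the product converging since $\sum_j\lambda_j^{1/2}<\infty$. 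Alternatively one may simply cite \cite{MRS18}, where this Gaussian integrability of $\xi$ is established.

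Combining the two steps gives $\E\norm{u}_{\alpha,\infty,T}^p<\infty$ for every $p>0$, and the second assertion is then immediate from the definition of the norm, since $\sup_{t\in[0,T],\,x\in D}\abs{u(t,x)}\le\sup_{t\in[0,T]}\norm{u(\cdot,t)}_\infty\le\norm{u}_{\alpha,\infty,T}$. The only genuinely substantive point is the Gaussian integrability of $\xi$ — that is, controlling the infinite random series $\sum_j\lambda_j^{1/2}\norm{B^H_j}_{\alpha,0,T}$ — which is handled by applying Fernique's theorem uniformly in $j$ (using the identical laws of the $B^H_j$) together with the summability in \ref{(A4)}; everything else is a one-line consequence of Proposition~\ref{pr:1} and the elementary inequality $q<2$.
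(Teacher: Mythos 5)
Your proposal is correct and follows essentially the same route as the paper: the corollary is read off from Proposition~\ref{pr:1} together with the elementary inequality $C\xi^{1/(1-\alpha)}\le a\xi^2+C'$ (valid since $\frac{1}{1-\alpha}<2$) and the Fernique-type bound $\E\exp\{a\xi^2\}<\infty$, which the paper states in the two lines preceding the corollary. The only difference is that the paper invokes Fernique's theorem directly for the a.s.\ finite seminorm $\xi$ of the infinite-dimensional Gaussian vector $(B^H_j)_j$, whereas you rederive this integrability coordinatewise via Cauchy--Schwarz, independence, and the summability in \ref{(A4)} — a correct and more self-contained, but not essentially different, argument.
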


\section{Existence and uniqueness of mild solution\index{mild solution}}
\label{sec:mild}

The following theorem is the main result of the article.

\begin{theorem}\label{th:mild}
Let $H\in (1/2,1 )$, $\alpha\in(1-H,1/2)$.
Assume that Hypotheses \ref{(A1)}--\ref{(A4)} hold.
Then the problem \eqref{eq:spde} has a unique mild solution.
\end{theorem}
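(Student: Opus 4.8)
The plan is to prove existence and uniqueness separately, leaning on the a~priori bounds already established. Existence of a variational solution (hence a mild solution, via \cite{NV06,SSV09}) is cited from the literature, so the substantive new content is \emph{uniqueness}; I would spend most of the argument there. For uniqueness I would take two mild solutions $u$ and $\tilde u$ to \eqref{eq:spde} with the same initial data $\varphi$, set $w=u-\tilde u$, subtract the two copies of \eqref{eq:mild}, and estimate $\norm{w}_{\alpha,\infty,t}$ (or the pair $\norm{w}_t$, $\norm{w}_{\alpha,1,t}$) by the same machinery used in Section~\ref{sec:apriori}. The point is that the $\varphi$-term cancels, so there is no constant term, and the right-hand side is linear in $w$.

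The key steps, in order: First, mimic Lemmas~\ref{l:1st} and~\ref{l:2nd} for $w$. The drift term is handled by the global Lipschitz continuity of $f$: $\abs{f(u(y,v))-f(\tilde u(y,v))}\le C\abs{w(y,v)}$, giving contributions controlled by $\int_0^t\norm{w}_s(t-s)^{-\alpha}ds$ and $\int_0^t\norm{w}_{\alpha,1,s}ds$ exactly as for $I_f$, $K_f'$, $K_f''$. The stochastic term needs the difference $h(u(y,v))-h(\tilde u(y,v))$; here the naive Lipschitz bound on $h$ is \emph{not} enough to control the increment term $I_{h3}$-analogue, because one must also estimate increments in time of $h(u(y,v))-h(\tilde u(y,v))$, and this is where the Lipschitz continuity of $h'$ enters: writing $h(u)-h(\tilde u)=w\int_0^1 h'(\tilde u+\theta w)\,d\theta$, one gets
\[
\abs{\bigl(h(u(y,v))-h(\tilde u(y,v))\bigr)-\bigl(h(u(y,r))-h(\tilde u(y,r))\bigr)}\le C\abs{w(y,v)-w(y,r)}\bigl(1+\norm{u}_v+\norm{\tilde u}_v\bigr)+C\abs{w(y,r)}\bigl(\abs{u(y,v)-u(y,r)}+\abs{\tilde u(y,v)-\tilde u(y,r)}\bigr),
\]
and then Corollary~\ref{cor:1} (finite moments, in particular $\norm{u}_{\alpha,\infty,T}$, $\norm{\tilde u}_{\alpha,\infty,T}<\infty$ a.s.) lets one absorb the factors $\bigl(1+\norm{u}_v+\norm{\tilde u}_v\bigr)$ and the increment quantities $\abs{u(y,v)-u(y,r)}/(v-r)^{\alpha+1}$ into the random constant, reducing everything to expressions linear in $\norm{w}_s$ and $\norm{w}_{\alpha,1,s}$, each weighted by an integrable singular kernel. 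After this, the proof proceeds exactly as in Proposition~\ref{pr:1}: one obtains inequalities of the form
\[
\norm{w}_t\le C\Xi\int_0^t\bigl(\norm{w}_s(t-s)^{-\alpha}+\norm{w}_{\alpha,1,s}\bigr)ds,\qquad
\norm{w}_{\alpha,1,t}\le C\Xi\int_0^t\bigl(\norm{w}_s(t-s)^{-2\alpha}+\norm{w}_{\alpha,1,s}(t-s)^{-\alpha}\bigr)ds,
\]
where $\Xi$ is a finite a.s.\ random constant depending on $\xi_{\alpha,H,T}$, $\norm{u}_{\alpha,\infty,T}$, $\norm{\tilde u}_{\alpha,\infty,T}$. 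The two-dimensional Grönwall argument of \cite[Lemma~4.1]{jumps} then forces $f_1(\lambda)=f_2(\lambda)=0$ (no free constant), i.e.\ $\norm{w}_{\alpha,\infty,T}=0$, so $u=\tilde u$ in $\B^{\alpha,2}(0,T;L^2(D))$, and a.s.\ as $L^2(D)$-valued processes.

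The main obstacle I anticipate is the bookkeeping for the increment-in-time estimate of $h(u)-h(\tilde u)$: it is genuinely the place where affinity of $h$ was used in \cite{SSV09} and where the hypothesis $h'\in\mathrm{Lip}$ must do real work, and one has to be careful that the extra factors produced (the $\norm{u}_v$, $\norm{\tilde u}_v$ and the time-increment ratios) are all a.s.\ finite — which is exactly why Corollary~\ref{cor:1} is needed and why moments, not just a.s.\ finiteness, were established. A secondary technical point is that a priori the two solutions need not be defined on a common set of full measure with uniformly bounded norms; this is circumvented by localizing on the events $\{\norm{u}_{\alpha,\infty,T}\le N,\ \norm{\tilde u}_{\alpha,\infty,T}\le N,\ \xi_{\alpha,H,T}\le N\}$, running the Grönwall estimate with deterministic constants there, and letting $N\to\infty$. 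Everything else — the singular-kernel integrals, the rearrangement inequalities, the convolution bound \eqref{eq:deltaG2} — is reused verbatim from Section~\ref{sec:apriori}.
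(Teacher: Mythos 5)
Your proposal is correct and follows essentially the same route as the paper: existence is cited from \cite{SSV09}, and uniqueness is obtained by repeating the estimates of Section~\ref{sec:apriori} for $u-\tilde u$, with the second-order difference of $h$ controlled via the Lipschitz continuity of $h'$ (the paper invokes \cite[Lemma~7.1]{NR02} for exactly the bound you derive) and the factors $\norm{u}_{\alpha,1,T}$, $\norm{\tilde u}_{\alpha,1,T}$ absorbed into an a.s.\ finite random constant. The only cosmetic difference is the final step: the paper adds the two lemmas and applies the generalized Gr\"onwall lemma \cite[Lemma~7.6]{NR02} to the single resulting inequality with kernel $(t-s)^{-2\alpha}$, rather than re-running the two-dimensional exponential-weight argument, and it works pathwise with the a.s.\ finite constants so no localization in $N$ is needed.
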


Under the standing assumptions, the existence of a mild solution\index{mild solution} was
established in \cite[Th.~2.3(a)]{SSV09}. Hence, it remains to prove the
uniqueness.

Let $u$ and $\tilde u$ be two mild solutions\index{mild solution} to the problem \eqref{eq:spde}.
In order to prove that $u$ and $\tilde u$ coincide
we shall establish that the norm
\begin{equation}
\label{eq:norm-du} \norm{u - \tilde u}_{\alpha,\infty,T} =\norm{u - \tilde
u}_{T} + \norm{u - \tilde u}_{\alpha,1,T}
\end{equation}
is equal to zero.
The proof of this fact is carried out similarly to that of
Proposition~\ref{pr:1}, using the bounds\index{bounds}
\begin{align}
&\abs{f \bigl(u(y,s) \bigr) - f \bigl(\tilde u(y,s) \bigr)} +
\abs{h \bigl(u(y,s) \bigr) - h \bigl(\tilde u(y,s) \bigr)}
\notag\\*
&\quad{}\le C \abs{u(y,s) - \tilde u(y,s)} \le C \norm{u - \tilde u}_s
\label{eq:fh-lipsh}
\end{align}
instead of \eqref{eq:f-growth} and \eqref{eq:h-growth}.
Therefore we omit some details.
As above, we first obtain the upper bounds for each of two terms in the
right-hand side of the norm \eqref{eq:norm-du}.

Let $\xi=\xi_{\alpha,H,T}$.
Denote also
\[
\eta= 1 + \norm{u}_{\alpha,1,T} + \norm{\tilde u}_{\alpha,1,T}.
\]
Corollary~\ref{cor:1} implies that $\eta$ is finite a.\,s.
\begin{lemma}
\label{l:uniq-1st}
Under assumptions \ref{(A1)}--\ref{(A4)},
\[
\norm{u - \tilde u}_t \le C \xi\eta\int_0^t
\norm{u - \tilde u}_{\alpha,\infty,s} (t-s)^{-\alpha}ds
\]
for all $t\in[0,T]$.
\end{lemma}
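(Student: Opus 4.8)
The plan is to follow the proof of Lemma~\ref{l:1st} almost verbatim, replacing the linear-growth estimates \eqref{eq:f-growth}, \eqref{eq:h-growth} by the Lipschitz bounds \eqref{eq:fh-lipsh}, and keeping careful track of the one genuinely new term that forces the factor $\eta$. Writing $d\coloneqq u-\tilde u$ and subtracting the representations \eqref{eq:mild} for $u$ and $\tilde u$, the deterministic terms $\int_D G(\cdot,t;y,0)\varphi(y)\,dy$ cancel, so $\abs{d(x,t)}\le I_f+I_h$, where $I_f$ and $I_h$ are, respectively, the deterministic and the stochastic integral terms with $f(u)-f(\tilde u)$ and $h(u)-h(\tilde u)$ in place of $f(u)$ and $h(u)$. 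By \eqref{eq:fh-lipsh} and \eqref{eq:gaus} one gets $I_f\le C\int_0^t\norm{d}_s\,ds$ at once. Applying \eqref{eq:bound-int} and \ref{(A4)} to $I_h$, and then adding and subtracting $G(x,t;y,s)\bigl(h(u(y,v))-h(\tilde u(y,v))\bigr)$ inside the $(s-v)^{-\alpha-1}$-weighted integral, one obtains $I_h\le C\xi(I_{h1}+I_{h2}+I_{h3})$; here $I_{h1}$ and $I_{h3}$ have the same structure as in Lemma~\ref{l:1st} with $\abs{h(u(y,\cdot))-h(\tilde u(y,\cdot))}$ in place of $\abs{h(u(y,\cdot))}$, whereas $I_{h2}$ involves the double difference $\bigl\lvert\bigl(h(u(y,s))-h(\tilde u(y,s))\bigr)-\bigl(h(u(y,v))-h(\tilde u(y,v))\bigr)\bigr\rvert$.

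I would dispose of $I_{h1}$ and $I_{h3}$ exactly as in Lemma~\ref{l:1st}: using \eqref{eq:fh-lipsh}, \eqref{eq:gaus}, the rearrangement inequality for $I_{h1}$ (since $\norm{d}_s$ is non-decreasing and $s^{-\alpha}$ non-increasing), and \eqref{eq:deltaG1} with \eqref{eq:beta-1} for $I_{h3}$, which together give $I_{h1}+I_{h3}\le C\int_0^t\norm{d}_s(t-s)^{-\alpha}\,ds$.

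The hard part --- and the only place where the hypothesis that $h'$ is Lipschitz, hence $\eta$, is needed --- is $I_{h2}$. The idea is to write $h(u(y,\cdot))-h(\tilde u(y,\cdot))=g(y,\cdot)\,d(y,\cdot)$ with $g(y,\cdot)\coloneqq\int_0^1 h'\bigl(\tilde u(y,\cdot)+\theta\,d(y,\cdot)\bigr)\,d\theta$, and to telescope
\[
\bigl(h(u(y,s))-h(\tilde u(y,s))\bigr)-\bigl(h(u(y,v))-h(\tilde u(y,v))\bigr)=g(y,s)\bigl(d(y,s)-d(y,v)\bigr)+\bigl(g(y,s)-g(y,v)\bigr)d(y,v).
\]
Since $h$ is Lipschitz, $\abs{g}\le C$; since $h'$ is Lipschitz, $\abs{g(y,s)-g(y,v)}\le C\bigl(\abs{\tilde u(y,s)-\tilde u(y,v)}+\abs{d(y,s)-d(y,v)}\bigr)$. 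Substituting into $I_{h2}$, the $(s-v)^{-\alpha-1}$-integral against $\abs{d(y,s)-d(y,v)}$ is $\le\norm{d}_{\alpha,1,s}$ and that against $\abs{\tilde u(y,s)-\tilde u(y,v)}$ is $\le\norm{\tilde u}_{\alpha,1,T}$, while $\abs{d(y,v)}\le\norm{d}_s$; using $\norm{d}_{\alpha,1,s}\le\norm{u}_{\alpha,1,T}+\norm{\tilde u}_{\alpha,1,T}\le\eta$, $\norm{\tilde u}_{\alpha,1,T}\le\eta$, and integrating out $y$ by \eqref{eq:gaus} yields $I_{h2}\le C\eta\int_0^t\bigl(\norm{d}_{\alpha,1,s}+\norm{d}_s\bigr)\,ds\le C\eta\int_0^t\norm{d}_{\alpha,\infty,s}\,ds$.

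Finally, collecting the estimates for $I_f$, $I_{h1}$, $I_{h2}$, $I_{h3}$, using $\eta\ge1$ and the elementary inequality $1\le T^{\alpha}(t-s)^{-\alpha}$ for $0\le s<t\le T$ to reinstate the singular weight on the unweighted pieces, I obtain $\abs{d(x,t)}\le C\xi\eta\int_0^t\norm{d}_{\alpha,\infty,s}(t-s)^{-\alpha}\,ds$. As in Lemma~\ref{l:1st}, the right-hand side is non-decreasing in $t$ (seen via the substitution $s=zt$), so taking the supremum over $x\in D$ and over $[0,t]$ on the left gives the claim.
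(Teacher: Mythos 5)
Your proof is correct and follows essentially the same route as the paper: the same decomposition into $P_f$ and $P_{h1},P_{h2},P_{h3}$, with $P_{h1},P_{h3}$ treated as in Lemma~\ref{l:1st} and the cross term $P_{h2}$ controlled via a second-order Lipschitz estimate for $h$ producing the factor $\eta$. The only difference is cosmetic: where the paper invokes the four-point inequality of \cite[Lemma 7.1]{NR02}, you derive it on the spot by writing $h(u)-h(\tilde u)=d\int_0^1 h'(\tilde u+\theta d)\,d\theta$ and telescoping, which is precisely the standard proof of that lemma.
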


\begin{proof}
By \eqref{eq:mild},
\begin{equation}
\label{eq:du} \abs{u(x,t) - \tilde u(x,t)} \le P_f +
P_h,
\end{equation}
where
\begin{align*}
P_f &= \int_0^t\int
_D \abs{G(x,t;y,s)} \abs{f \bigl(u(y,s) \bigr) - f \bigl(
\tilde u (y,s) \bigr)} dy\,ds,
\\
P_h &= \sum_{j=1}^\infty
\lambda_j^{1/2} \abs{\int_0^t
\!\! \int_D G(x,t;y,s) \bigl(h \bigl(u(y,s) \bigr) - h
\bigl(\tilde u(y,s) \bigr) \bigr) e_j(y)\,dy\,dB^H_j(s)}
\!.
\end{align*}
Using the bound
\eqref{eq:fh-lipsh}
and the Gaussian\index{Gaussian property} property of $G$ we immediately get
\begin{equation}
\label{eq:Pf} P_f \le C \int_0^t
\norm{u - \tilde u}_s ds.
\end{equation}

Further, similarly to \eqref{eq:apr-Ih},
\[
P_h \le C \xi(P_{h1}+P_{h2}+P_{h3}),
\]
where
\begin{align}
P_{h1} &= \int_0^t s^{-\alpha}
\int_D \abs{G(x,t;y,s)} \abs{h \bigl(u(y,s) \bigr) - h
\bigl(\tilde u(y,s) \bigr)}\,dy\,ds, \notag
\\
P_{h2} &= \int_0^t \int
_0^s (s-v)^{-\alpha-1}\int
_D \abs{G(x,t;y,s)} \notag
\\*
&\quad\times\abs{h \bigl(u(y,s) \bigr) - h \bigl(\tilde u(y,s) \bigr) - h
\bigl(u(y,v) \bigr) + h \bigl(\tilde u(y,v) \bigr)} dy\,dv\,ds, \label{eq:Ph2-def}
\\
P_{h3} &= \int_0^t \int
_0^s (s-v)^{-\alpha-1}\int
_D \abs{G(x,t;y,s) - G(x,t;y,v)} \notag
\\*
&\quad\times\abs{h \bigl(u(y,v) \bigr) - h \bigl(\tilde u(y,v) \bigr)} dy\,dv
\,ds. \notag
\end{align}
The bounds\index{bounds}
\begin{align}
P_{h1} &\le C \int_0^t
(t-s)^{-\alpha} \norm{u - \tilde u}_s ds \label{eq:Ph1}
\shortintertext{and} P_{h3} &\le C \int_0^t
(t-s)^{-\alpha} \norm{u - \tilde u}_s ds \label{eq:Ph3}
\end{align}
are obtained analogously to the bounds\index{bounds} \eqref{eq:apr-Ih1} and \eqref
{eq:apr-Ih3}.

According to \cite[Lemma 7.1]{NR02}, the assumption \ref{(A3)} implies
that for any $x_1$, $x_2$, $x_3$, $x_4$,
\begin{align*}
\abs{h(x_1)-h(x_2)-h(x_3)+h(x_4)}
&{}\le C\abs{x_1-x_2-x_3+x_4}
\\
&\quad{} + C \abs{x_1-x_3} (\abs{x_1-x_2}+
\abs{x_3-x_4} ).
\end{align*}
Therefore, we can write
\begin{align}
\MoveEqLeft \int_0^s \frac{\abs{h (u(y,s) ) - h (\tilde
u(y,s) )
- h (u(y,v) ) + h (\tilde u(y,v) )}}{(s-v)^{\alpha
+1}}\,dv
\notag
\\*
&\le C\int_0^s \frac{\abs{u(y,s)-\tilde u(y,s)-u(y,v)
+ \tilde u(y,v)}}{(s-v)^{\alpha+1}}\,dv \notag
\\
&\quad+ C \abs{u(y,s)-\tilde u(y,s)} \int_0^s
\frac{\abs{u(y,s)-u(y,v)}
+ \abs{\tilde u(y,s) - \tilde u(y,v)}}{(s-v)^{\alpha+1}}\,dv \notag
\\
&\le C \norm{u - \tilde u}_{\alpha,1,s} + C \norm{u - \tilde u}_s
(\norm{u}_{\alpha,1,s}+\norm{\tilde u}_{\alpha,1,s} ) \notag
\\
&\le C \eta\norm{u - \tilde u}_{\alpha,\infty,s}. \label{eq:h4dif}
\end{align}
Inserting the bound \eqref{eq:h4dif} into \eqref{eq:Ph2-def}, we get
\begin{equation}
\label{eq:Ph2} P_{h2} \le C \eta \int_0^t
\norm{u - \tilde u}_{\alpha,\infty,s} \int_D
\abs{G(x,t;y,s)} dy\,ds \le C \eta\int_0^t
\norm{u - \tilde u}_{\alpha,\infty,s} ds.
\end{equation}

Combining \eqref{eq:du}, \eqref{eq:Pf}, \eqref{eq:Ph1}, \eqref{eq:Ph3},
and \eqref{eq:Ph2} we arrive at
\[
\abs{u(x,t) - \tilde u(x,t)} \le C \xi\eta\int_0^t
\norm{u - \tilde u}_{\alpha,\infty,s} (t-s)^{-\alpha}ds.
\]
We conclude the proof similarly to that of Lemma~\ref{l:1st}, t using
the monotonicity of the right-hand side.
\end{proof}

\begin{lemma}
\label{l:uniq-2nd}
Under assumptions \ref{(A1)}--\ref{(A4)},
\begin{equation}
\label{eq:uniq-term2} \norm{u - \tilde u}_{\alpha,1,t} \le C \xi\eta\int
_0^t\norm{u - \tilde u}_{\alpha,\infty,s}
(t-s)^{-2\alpha} ds
\end{equation}
for all $t\in[0,T]$.
\end{lemma}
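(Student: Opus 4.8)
The plan is to reproduce the proof of Lemma~\ref{l:2nd} essentially line by line, with the linear-growth bounds on $f$ and $h$ replaced by the Lipschitz bounds \eqref{eq:fh-lipsh}, and the single time-increments of $h(u(y,\cdot))$ replaced by mixed second-order increments, which are already controlled by the estimate \eqref{eq:h4dif} established in the proof of Lemma~\ref{l:uniq-1st}. First I would subtract the mild formulations \eqref{eq:mild} for $u$ and $\tilde u$: since both solutions carry the same initial datum $\varphi$, the term $J_\varphi$ disappears, leaving $\abs{(u-\tilde u)(x,t)-(u-\tilde u)(x,s)}\le J_f+J_h$, where $J_f$ and $J_h$ have exactly the structure of Lemma~\ref{l:2nd} with $f(u(y,v))$, $h(u(y,v))$ replaced by $f(u(y,v))-f(\tilde u(y,v))$, $h(u(y,v))-h(\tilde u(y,v))$. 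Accordingly, $\int_0^t(t-s)^{-\alpha-1}\abs{(u-\tilde u)(x,t)-(u-\tilde u)(x,s)}\,ds$ splits into $K_\varphi=0$, $K_f\le K_f'+K_f''$, and $K_h\le C\,\xi_{\alpha,H,T}(K_{h1}+\dots+K_{h6})$, the six kernels arising as before from \eqref{eq:bound-int} and \ref{(A4)}. Throughout, fix $\delta\in(\frac{d}{d+2}\vee\alpha,1)$ as in Lemma~\ref{l:2nd}.

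For $K_f'$ and $K_f''$ I would repeat verbatim the computations leading to \eqref{eq:Kf'} and \eqref{eq:Kf''}, invoking \eqref{eq:fh-lipsh} in place of the linear growth, together with \eqref{eq:gaus}, \eqref{eq:deltaG} and \eqref{eq:beta-1}; since the factor $1+\norm{u}_v$ becomes $\norm{u-\tilde u}_s\le\norm{u-\tilde u}_{\alpha,\infty,s}$, both are dominated by $C\int_0^t\norm{u-\tilde u}_{\alpha,\infty,s}(t-s)^{-2\alpha}\,ds$. Among the stochastic pieces, $K_{h1}$, $K_{h3}$ and $K_{h4}$ involve only a pointwise value of $h(u)-h(\tilde u)$, which \eqref{eq:fh-lipsh} controls by a constant times $\norm{u-\tilde u}$ at the relevant time; they are handled exactly as in \eqref{eq:Kh1}, \eqref{eq:Kh3} and \eqref{eq:Kh4} (keeping the rearrangement-inequality step for $K_{h4}$), each producing $C\int_0^t\norm{u-\tilde u}_s(t-s)^{-2\alpha}\,ds$. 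In $K_{h2}$ and $K_{h5}$ the single increment $h(u(y,v))-h(u(y,r))$ of Lemma~\ref{l:2nd} is now the mixed increment $h(u(y,v))-h(\tilde u(y,v))-h(u(y,r))+h(\tilde u(y,r))$, whose inner integral against $(v-r)^{-\alpha-1}$ is bounded by $C\eta\,\norm{u-\tilde u}_{\alpha,\infty,v}$ through \eqref{eq:h4dif} (for $K_{h2}$ one first enlarges the $r$-range from $[s,v]$ to $[0,v]$); the remaining manipulations are unchanged and yield $C\eta\int_0^t\norm{u-\tilde u}_{\alpha,\infty,s}(t-s)^{-\alpha}\,ds$. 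Finally, $K_{h6}$ uses \eqref{eq:fh-lipsh} for $h(u(y,r))-h(\tilde u(y,r))$ together with the mixed Green's-function difference \eqref{eq:deltaG2}, and the iterated chain of beta-function bounds is identical to that in \eqref{eq:Kh6}, giving $C\int_0^t\norm{u-\tilde u}_s(t-s)^{-2\alpha}\,ds$.

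Collecting all pieces and using $\eta\ge1$, $\norm{u-\tilde u}_s\le\norm{u-\tilde u}_{\alpha,\infty,s}$, and $(t-s)^{-\alpha}\le T^{\alpha}(t-s)^{-2\alpha}$ to merge the two weights, I would arrive at
\[
\int_0^t\frac{\abs{(u-\tilde u)(x,t)-(u-\tilde u)(x,s)}}{(t-s)^{\alpha+1}}\,ds\le C\,\xi\eta\int_0^t\norm{u-\tilde u}_{\alpha,\infty,s}(t-s)^{-2\alpha}\,ds.
\]
The right-hand side is independent of $x$ and, under the substitution $s=zt$, equals $C\xi\eta\,t^{1-2\alpha}\int_0^1\norm{u-\tilde u}_{\alpha,\infty,zt}(1-z)^{-2\alpha}\,dz$, hence is non-decreasing in $t$; taking the supremum over $x\in D$ and over the left endpoint in $[0,t]$ gives \eqref{eq:uniq-term2}. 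The only genuinely delicate step is $K_{h6}$ with the double Green's-function difference \eqref{eq:deltaG2}, but since precisely this chain of estimates was already carried out in \eqref{eq:Kh6}, the proof is essentially a transcription; the remaining effort is the bookkeeping needed to check that every term collapses into the single claimed integral and that the a.s.\ finite constant $\eta$ (finite by Corollary~\ref{cor:1}) propagates correctly throughout.
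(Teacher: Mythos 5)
Your proposal is correct and follows essentially the same route as the paper: the same decomposition into $K_f'$, $K_f''$ and six stochastic terms (the paper's $Q_f'$, $Q_f''$, $Q_{h1},\dots,Q_{h6}$), with \eqref{eq:fh-lipsh} replacing the linear-growth bounds, \eqref{eq:h4dif} handling the mixed second-order increments of $h$ in the two relevant terms, and \eqref{eq:deltaG2} handling the double Green's-function difference. The only cosmetic difference is that you make explicit the enlargement of the $r$-range before applying \eqref{eq:h4dif} and the merging of the $(t-s)^{-\alpha}$ and $(t-s)^{-2\alpha}$ weights, which the paper leaves implicit.
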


\begin{proof}
As in the proof of Lemma \ref{l:2nd}, we can write
\begin{align*}
\MoveEqLeft[.4] \int_0^t \frac{\abs{u(x,t) - \tilde u(x,t) - u(x,s)
+ \tilde u(x,s)}}{(t-s)^{\alpha+1}}
\,ds
\\[-2pt]
&\le \int_0^t (t-s)^{-\alpha-1} \Biggl( \Biggl
\lvert\int_0^t\!\!\int_D
G(x,t;y,v) \bigl( f \bigl(u(y,v) \bigr) - f \bigl(\tilde u(y,v) \bigr) \bigr)
\,dy\,dv
\\[-2pt]
&\quad-\int_0^s\!\!\int_D
G(x,s;y,v) \bigl( f \bigl(u(y,v) \bigr) - f \bigl(\tilde u(y,v) \bigr) \bigr)
\,dy\,dv \Biggr\rvert
\\[-2pt]
&\quad+ \sum_{j=1}^\infty
\lambda_j^{1/2} \Biggl\lvert\int_0^t
\!\! \int_D G(x,t;y,v) \bigl( h \bigl(u(y,v) \bigr) - h
\bigl(\tilde u(y,v) \bigr) \bigr)e_j(y)\,dy\,dB^H_j(v)
\\[-2pt]
&\quad- \int_0^s \!\!\int
_D G(x,s;y,v) \bigl( h \bigl(u(y,v) \bigr) - h \bigl(\tilde
u(y,v) \bigr) \bigr)e_j(y)\,dy\,dB^H_j(v)
\Biggr\rvert \Biggr) ds
\\[-2pt]
&\le Q_f'+Q_f''+Q_h'+Q_h'',
\end{align*}
where
\begin{align*}
Q_f' &= \int_0^t
(t-s)^{-\alpha-1} \int_s^t\!\!\int
_D \abs{G(x,t;y,v)} \abs{ f \bigl(u(y,v) \bigr) - f \bigl(
\tilde u(y,v) \bigr) } \,dy\,dv\,ds,
\\[-2pt]
Q_f'' &= \int_0^t
(t-s)^{-\alpha-1} \int_0^s\!\!\int
_D \abs{G(x,t;y,v) - G(x,s;y,v)}
\\[-2pt]
&\quad\times\abs{ f \bigl(u(y,v) \bigr) - f \bigl(\tilde u(y,v) \bigr) } \,dy
\,dv\,ds,
\\[-2pt]
Q_h' &= \sum_{j=1}^\infty
\lambda_j^{1/2} \int_0^t
(t-s)^{-\alpha-1}
\\[-2pt]
&\quad\times\abs{\int_s^t\!\! \int
_D G(x,t;y,v) \bigl( h \bigl(u(y,v) \bigr) - h \bigl(\tilde
u(y,v) \bigr) \bigr)e_j(y)\,dy\, dB^H_j(v)}
ds,
\\[-2pt]
Q_h'' &= \sum
_{j=1}^\infty\lambda_j^{1/2} \int
_0^t (t-s)^{-\alpha-1} \Biggl\lvert\int
_s^t\!\! \int_D
\bigl(G(x,t;y,v) - G(x,s;y,v) \bigr)
\\[-2pt]
&\quad\times \bigl( h \bigl(u(y,v) \bigr) - h \bigl(\tilde u(y,v) \bigr)
\bigr)e_j(y)\,dy\,dB^H_j(v) \Biggr\rvert
\,ds.
\end{align*}

Similarly to \eqref{eq:Kf'}, \eqref{eq:Kf''} and \eqref{eq:Jh'}, we get
\begin{align*}
Q_{f}' &\le C\int_0^t
\norm{u-\tilde u}_v (t-v)^{-2\alpha}\,dv,
\\
Q_{f}'' &\le C\int_0^t
\norm{u-\tilde u}_v (t-v)^{-2\alpha}\,dv, \shortintertext{and}
Q_h' &\le C \xi(Q_{h1}+Q_{h2}+Q_{h3}),
\end{align*}
where
\begin{align*}
Q_{h1} &= \int_0^t
(t-s)^{-\alpha-1} \int_s^t
(v-s)^{-\alpha}
\\*
&\quad\times\int_D \abs{G(x,t;y,v)}\abs{h \bigl(u(y,v)
\bigr) - h \bigl(\tilde u(y,v) \bigr)} dy\,dv\,ds,
\\
Q_{h2} &= \int_0^t
(t-s)^{-\alpha-1} \int_s^t\!\!\int
_s^v (v-r)^{-\alpha-1} \int
_D \abs{G(x,t;y,v)}
\\*
&\quad\times \abs{h \bigl(u(y,v) \bigr) - h \bigl(\tilde u(y,v) \bigr) - h
\bigl(u(y,r) \bigr) + h \bigl(\tilde u(y,r) \bigr)} dy \,dr\,dv\,ds,
\\
Q_{h3} &= \int_0^t
(t-s)^{-\alpha-1} \int_s^t\!\!\int
_s^v (v-r)^{-\alpha-1}
\\*
&\quad\times \int_D \abs{G(x,t;y,v) - G(x,t;y,r)} \abs{h
\bigl(u(y,r) \bigr) - h \bigl(\tilde u(y,r) \bigr)} dy \,dr\,dv\,ds.
\end{align*}
Further, we estimate
\begin{align*}
Q_{h1} &\le C\int_0^t \norm{u-
\tilde u}_v (t-v)^{-2\alpha}\,dv,
\\
Q_{h3} &\le C\int_0^t \norm{u-
\tilde u}_v (t-v)^{-2\alpha}\,dv,
\end{align*}
similarly to \eqref{eq:Kh1} and \eqref{eq:Kh3}.
Then applying \eqref{eq:h4dif}, the Gaussian\index{Gaussian property} property of $G$, and
integrating with respect to $s$, we obtain
\begin{align*}
Q_{h2} &\le C \eta\int_0^t
(t-s)^{-\alpha-1} \int_s^t \norm{u-\tilde
u}_{\alpha,\infty,v} \int_D \abs{G(x,t;y,v)} dy\,dv\,ds,
\\
&\le C \eta\int_0^t (t-v)^{-\alpha}
\norm{u-\tilde u}_{\alpha,\infty,v}\,dv \le C \eta\int_0^t
(t-v)^{-2\alpha} \norm{u-\tilde u}_{\alpha,\infty,v}\,dv.
\end{align*}
Finally,
\[
Q_h'' \le C \xi(Q_{h4}+Q_{h5}+Q_{h6}),
\]
where
\begin{align*}
Q_{h4} &= \int_0^t
(t-s)^{-\alpha-1} \int_0^s v^{-\alpha}
\int_D \abs{G(x,t;y,v) - G(x,s;y,v)}
\\*
&\quad\times \abs{h \bigl(u(y,v) \bigr) - h \bigl(\tilde u(y,v) \bigr)} dy\,dv
\,ds,
\\
Q_{h5} &= \int_0^t
(t-s)^{-\alpha-1} \int_0^s\!\!\int
_0^v (v-r)^{-\alpha-1}\int
_D \abs{G(x,t;y,v) - G(x,s;y,v)}
\\*
&\quad\times\abs{h \bigl(u(y,v) \bigr) - h \bigl(\tilde u(y,v) \bigr) -h
\bigl(u(y,r) \bigr) + h \bigl(\tilde u(y,r) \bigr)} dy \,dr \,dv\,ds,
\\
Q_{h6} &= \int_0^t
(t-s)^{-\alpha-1} \int_0^s\!\!\int
_0^v (v-r)^{-\alpha-1} \int
_D \bigl\lvert G(x,t;y,v) - G(x,s;y,v)
\\*
&\quad- G(x,t;y,r) + G(x,s;y,r)\bigr\rvert \abs{h \bigl(u(y,r) \bigr) - h
\bigl(\tilde u(y,r) \bigr)} dy \,dr\,dv\,ds.
\end{align*}
Arguing as in the proof of Lemma \ref{l:2nd}, we can prove the bounds\index{bounds}
\begin{align*}
Q_{h4} &\le C\int_0^t \norm{u-
\tilde u}_v (t-v)^{-2\alpha}\,dv,
\\
Q_{h6} &\le C\int_0^t \norm{u-
\tilde u}_v (t-v)^{-2\alpha}\,dv,
\end{align*}
which are similar to \eqref{eq:Kh4} and \eqref{eq:Kh6}.
Finally, using \eqref{eq:h4dif}, \eqref{eq:deltaG}, and \eqref
{eq:beta-1}, we get
\begin{align*}
Q_{h5} &\le C \eta\int_0^t
(t-s)^{-\alpha-1} \int_0^s \norm{u-\tilde
u}_{\alpha,\infty,v}
\\*
&\quad\times \int_D \abs{G(x,t;y,v) - G(x,s;y,v)}\,dy
\,dv\,ds
\\
&\le C \eta\int_0^t (t-s)^{\delta-\alpha-1} \int
_0^s \norm{u-\tilde u}_{\alpha,\infty,v}
(s-v)^{-\delta}\,dv\,ds
\\
&\le C \eta\int_0^t \norm{u-\tilde
u}_{\alpha,\infty,v} (t-v)^{-\alpha}\,dv
\\
&\le C \eta\int_0^t \norm{u-\tilde
u}_{\alpha,\infty,v} (t-v)^{-2\alpha}\,dv.
\end{align*}

Combining the bounds\index{bounds} for $Q_{f}'$, $Q_{f}''$ and $Q_{hi}$, $i=1,\dots,6$,
we arrive at
\[
\int_0^t\! \frac{\abs{u(x,t) - \tilde u(x,t) - u(x,s)
+ \tilde u(x,s)}}{(t-s)^{\alpha+1}}ds \le C \xi\eta
\int_0^t\norm{u - \tilde u}_{\alpha,\infty,s}
(t-s)^{-2\alpha} ds.
\]
Taking suprema we get \eqref{eq:uniq-term2}.
\end{proof}

\begin{proof}[Proof of Theorem \ref{th:mild}]
Adding the bounds\index{bounds} from Lemmata \ref{l:uniq-1st}--\ref{l:uniq-2nd}, we
obtain that for all $t\in[0,T]$
\begin{align*}
\norm{u-\tilde u}_{\alpha,\infty,t} &\le C \xi\eta\int_0^t
\norm{u-\tilde u}_{\alpha,\infty,s} (t-s)^{-2\alpha} ds
\\
&\le C \xi\eta\, t^{2\alpha} \int_0^t
\norm{u-\tilde u}_{\alpha,\infty,s} (t-s)^{-2\alpha} s^{-2\alpha} ds.
\end{align*}
Then $\norm{u-\tilde u}_{\alpha,\infty,T} = 0$ by the generalized Gr\"
onwall lemma \cite[Lemma 7.6]{NR02}.
\end{proof}

\begin{acknowledgement}[title={Acknowledgments}]
The authors are grateful to Yuliya Mishura for giving the impetus to this
article and her constant advise and support.
\end{acknowledgement}

\begin{funding}
The first author acknowledges that the present research is carried through
within the frame and support of the ToppForsk project nr. \gnumber[refid=GS1]{274410} of the \gsponsor[id=GS1,sponsor-id=501100005416]{Research Council of Norway} with title STORM: Stochastics for Time-Space Risk Models.
\end{funding}

\end{document}